\newtheorem{theorem}{Theorem}[section]
\newtheorem{lemma}[theorem]{Lemma}
\newtheorem{prop}[theorem]{Proposition}
\newtheorem{cor}[theorem]{Corollary}
\theoremstyle{remark}
\newtheorem{definition}[theorem]{Definition}
\DeclareMathOperator{\dist}{dist} 
\newcommand{\bQ}{\mathbb{Q}}
\newcommand{\bN}{\mathbb{N}}
\newcommand{\bP}{\mathbb{P}}
\newcommand{\bR}{\mathbb{R}}
\newcommand{\bZ}{\mathbb{Z}}
\newcommand{\cC}{\mathcal{C}}
\newcommand{\cO}{\mathcal{O}}
\newcommand{\Cbar}{\bar{\cC}}
\newcommand{\Cli}{\cC^{\textit{li}}}
\newcommand{\et}{\quad\text{and}\quad}
\newcommand{\GL}{\mathrm{GL}}
\newcommand{\Lbar}{\bar{L}}
\newcommand{\ssi}{\quad\Longleftrightarrow\quad}
\newcommand{\ua}{\mathbf{a}}
\newcommand{\ur}{\mathbf{r}}
\newcommand{\uu}{\mathbf{u}}
\newcommand{\uv}{\mathbf{v}}
\newcommand{\uw}{\mathbf{w}}
\newcommand{\ux}{\mathbf{x}}
\newcommand{\uxi}{\underline{\xi}}
\newcommand{\uXi}{\underline{\Xi}}
\newcommand{\uy}{\mathbf{y}}
\newcommand{\uz}{\mathbf{z}}
\numberwithin{equation}{section}
\begin{document}

\baselineskip=15pt

\title[Rational approximation on conics]
{Rational approximation to real points on conics}
\author{Damien ROY}
\address{
   D\'epartement de Math\'ematiques\\
   Universit\'e d'Ottawa\\
   585 King Edward\\
   Ottawa, Ontario K1N 6N5, Canada}
\email[Damien Roy]{droy@uottawa.ca}
\subjclass[2000]{Primary 11J13; Secondary 14H50}
\keywords{algebraic curves, conics, real points, approximation by rational points, exponent of approximation, simultaneous approximation}
\thanks{Research partially supported by NSERC}


\begin{abstract}
A point $(\xi_1,\xi_2)$ with coordinates in a subfield of $\bR$ of transcendence degree one over $\bQ$, with $1,\xi_1,\xi_2$ linearly independent over $\bQ$, may have a uniform exponent of approximation by elements of $\bQ^2$ that is strictly larger than the lower bound $1/2$ given by Dirichlet's box principle. This appeared as a surprise, in connection to work of Davenport and Schmidt, for points of the parabola $\{(\xi,\xi^2) \,;\, \xi\in\bR\}$.  The goal of this paper is to show that this phenomenon extends to all real conics defined over $\bQ$, and that the largest exponent of approximation achieved by points of these curves satisfying the above condition of linear independence is always the same, independently of the curve, namely $1/\gamma\cong 0.618$ where $\gamma$ denotes the golden ratio.
\par
\medskip
\noindent
{\sc R\'esum\'e.}
Un point $(\xi_1,\xi_2)$ \`a coordonn\'ees dans un sous-corps de $\bR$ de degr\'e de transcendence un sur $\bQ$, avec $1,\xi_1,\xi_2$ lin\'eairement ind\'ependants sur $\bQ$, peut admettre un exposant d'approximation uniforme par les \'el\'ements de $\bQ^2$ qui soit strictement plus grand que la borne inf\'erieure $1/2$ que garantit le principe des tiroirs de Dirichlet.  Ce fait inattendu est apparu, en lien avec des travaux de Davenport et Schmidt, pour les points de la parabole $\{(\xi,\xi^2) \,;\, \xi\in\bR\}$.  Le but de cet article est de montrer que ce ph\'enom\`ene s'\'etend \`a toutes les coniques r\'eelles d\'efinies sur $\bQ$, et que le plus grand exposant d'approximation atteint par les points de ces courbes, sujets \`a la condition d'ind\'ependance lin\'eaire mentionn\'ee plus t\^ot, est toujours le m\^eme, ind\'ependamment de la courbe, \`a savoir $1/\gamma\cong 0.618$ o\`u $\gamma$ d\'esigne le nombre d'or.
\end{abstract}

\maketitle

%
%

\section{Introduction}
\label{sec:intro}

Let $n$ be a positive integer and let $\uxi=(\xi_1,\dots,\xi_n)\in \bR^n$.  The \emph{uniform exponent of approximation to $\uxi$ by rational points}, denoted $\lambda(\uxi)$, is defined as the supremum of all real numbers $\lambda$ for which the system of inequalities
\begin{equation}
 \label{intro:eq:lambda}
 |x_0|\le X, \quad \max_{1\le i\le n} |x_0\xi_i-x_i| \le X^{-\lambda}
\end{equation}
admits a non-zero solution $\ux=(x_0,x_1,\dots,x_n)\in \bZ^{n+1}$ for each sufficiently large real number $X>1$.  It is one of the classical ways of measuring how well $\uxi$ can be approximated by elements of $\bQ^n$, because each solution of \eqref{intro:eq:lambda} with $x_0\neq 0$ provides a rational point $\ur=(x_1/x_0,\dots,x_n/x_0)$ with denominator dividing $x_0$ such that $\|\uxi-\ur\| \le |x_0|^{-\lambda-1}$, where the symbol $\|\ \|$ stands for the maximum norm.  We call it a ``uniform exponent'' following the terminology of Y.~Bugeaud and M.~Laurent in \cite[\S1]{BL} because we require a solution of \eqref{intro:eq:lambda} for each sufficiently large $X$ (but note that our notation is slightly different as they denote it $\hat{\lambda}(\uxi)$).  This exponent depends only on the $\bQ$-vector subspace of $\bR$ spanned by $1,\xi_1,\dots,\xi_n$ and so, by a result of Dirichlet \cite[Chapter II, Theorem 1A]{Sc}, it satisfies $\lambda(\uxi)\ge 1/(s-1)$ where $s\ge 1$ denotes the dimension of that subspace.  In particular we have $\lambda(\uxi)=\infty$ when $\uxi\in\bQ^n$, while it is easily shown that $\lambda(\uxi)\le 1$ when $\uxi\notin\bQ^n$ (see for example \cite[Prop.~2.1]{BL}).

In their seminal work \cite{DS}, H.~Davenport and W.~M.~Schmidt determine an upper bound $\lambda_n$, depending only on $n$, for $\lambda(\xi,\xi^2,\dots,\xi^n)$ where $\xi$ runs through all real numbers such that $1,\xi,\dots,\xi^n$ are linearly independent over $\bQ$, a condition which amounts to asking that $\xi$ is not algebraic over $\bQ$ of degree $n$ or less.  Using geometry of numbers, they deduce from this a result of approximation to such $\xi$ by algebraic integers of degree at most $n+1$.  In particular they prove that $\lambda(\xi,\xi^2) \le \lambda_2:=1/\gamma\cong 0.618$ for each non-quadratic irrational real number $\xi$, where $\gamma=(1+\sqrt{5})/2$ denotes the golden ratio.  It is shown in \cite{Rnote,RcubicI} that this upper bound is best possible and, in \cite{RcubicII}, that the corresponding result of approximation by algebraic integers of degree at most $3$ is also best possible.  For $n\ge 3$, no optimal value is known for $\lambda_n$.  At present the best known upper bounds are $\lambda_3 \le (1 + 2\gamma - \sqrt{1+4\gamma^2})/2 \cong 0.4245$ (see \cite{Racta}) and $\lambda_n\le 1/\lceil n/2\rceil$ for $n\ge 4$ (see \cite{La}).

As a matter of approaching this problem from a different angle, we propose to extend it to the following setting.

\begin{definition}
Let $\cC$ be a closed algebraic subset of $\bR^n$ of dimension $1$ defined over $\bQ$, irreducible over $\bQ$, and not contained in any proper affine linear subspace of $\bR^n$ defined over $\bQ$.  Then, we put
$
 \lambda(\cC)=\sup\{\lambda(\uxi)\,;\, \uxi\in \Cli\}
$
where $\Cli$ denotes the set of points $\uxi=(\xi_1,\dots,\xi_n)\in\cC$ such that $1,\xi_1,\dots,\xi_n$ are linearly independent over $\bQ$.
\end{definition}

Equivalently, such a curve may be described as the Zariski closure over $\bQ$ in $\bR^n$ of a point $\uxi \in \bR^n$ whose coordinates $\xi_1,\dots,\xi_n$ together with $1$ are linearly independent over $\bQ$ and generate over $\bQ$ a subfield of $\bR$ of transcendence degree one.  In particular $\Cli$ is not empty as it contains that point.  From the point of view of metrical number theory the situation is simple since, for the relative Lebesgue measure, almost all points $\uxi$ of $\cC$ have $\lambda(\uxi)=1/n$ (see \cite{K}).  Of special interest is the curve $\cC_n:=\{(\xi,\xi^2,\dots,\xi^n)\,;\, \xi\in \bR\}$ for any $n\ge 2$.  As mentioned above, we have $\lambda(\cC_2)=1/\gamma$ and the problem remains to compute $\lambda(\cC_n)$ for $n\ge 3$.  In this paper, we look at the case of conics in $\bR^2$ and prove the following result.

\begin{theorem}
\label{intro:thm}
Let $\cC$ be a closed algebraic subset of $\bR^2$ of dimension $1$ and degree $2$.  Suppose that $\cC$ is defined over $\bQ$ and irreducible over $\bQ$.  Then, we have $\lambda(\cC)=1/\gamma$.  Moreover, the set of points $\uxi\in\Cli$ with $\lambda(\uxi)=1/\gamma$ is countably infinite.
\end{theorem}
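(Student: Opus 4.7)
I would split the proof into three parts: the upper bound $\lambda(\cC)\le 1/\gamma$, the existence of at least one point of $\Cli$ attaining this bound, and the countable infinitude of such points. The data of $\cC$ enters through its defining equation, which after clearing denominators and homogenizing can be taken as an irreducible ternary quadratic form $Q\in\bZ[X_0,X_1,X_2]$ satisfying $Q(1,\xi_1,\xi_2)=0$ for every $(\xi_1,\xi_2)\in\cC$; I write $B$ for the associated symmetric bilinear form on $\bZ^3$, so that $B(\ux,\ux)=2Q(\ux)$.

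For the upper bound I would adapt the three-minimal-points argument of Davenport--Schmidt. Fix $\uxi\in\Cli$ with $\lambda=\lambda(\uxi)$, and for each large $X$ let $\ux^{(1)},\ux^{(2)},\ux^{(3)}\in\bZ^3$ be primitive representatives of three successive minima of the convex body attached to \eqref{intro:eq:lambda}. The values $B(\ux^{(i)},\ux^{(j)})$ are integers whose sizes are controlled through the Taylor expansion of $B$ at $(1,\xi_1,\xi_2)$ together with the approximation quality; combined with the non-vanishing integrality of $\det(\ux^{(1)},\ux^{(2)},\ux^{(3)})$, this should deliver a recurrence between consecutive minimal heights paralleling the one in \cite{DS} and yielding $\lambda\le 1/\gamma$.

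For the lower bound, I would exploit that $\lambda(\uxi)$ depends only on the $\bQ$-vector space $\bQ+\bQ\xi_1+\bQ\xi_2\subset\bR$, and is therefore invariant under any $\GL_3(\bQ)$-transformation of the projective coordinates $(X_0,X_1,X_2)$. When $\cC$ possesses a $\bQ$-rational point, $Q$ is $\bQ$-isotropic and thus $\GL_3(\bQ)$-equivalent, up to scalar, to the form defining the projective closure of $\cC_2$; any such transformation carries the countable family of extremal points on $\cC_2$ from \cite{Rnote,RcubicI} into $\Cli$ with the same exponent $1/\gamma$, yielding infinitely many examples on $\cC$.

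The main obstacle is the anisotropic case, where $\cC$ has no $\bQ$-rational point and the $\GL_3(\bQ)$-reduction above is unavailable. My plan here is to extend scalars to a quadratic extension $K/\bQ$ over which $Q$ becomes isotropic, carry out Roy's fixed-point-of-a-substitution construction over $K$ in a manner equivariant with respect to $\mathrm{Gal}(K/\bQ)$---that is, choose a substitution preserving $Q$ and commuting with Galois---and descend the resulting Galois-stable limit point to a point of $\Cli$. Matching its exponent exactly against $1/\gamma$ uses the upper bound from the first step, and varying the initial data of the equivariant substitution produces countably many distinct such points.
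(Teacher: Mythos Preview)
Your case split has a gap. You claim that when $\cC$ has a $\bQ$-rational point, $Q$ is $\GL_3(\bQ)$-equivalent (up to scalar) to the parabola form $X_0X_2-X_1^2$; but this fails when $Q$ is degenerate. The conic $x_1^2-2=0$, which the paper explicitly discusses, homogenizes to the rank-$2$ form $X_1^2-2X_0^2$, which has the rational zero $(0:0:1)$ yet cannot be equivalent to any rank-$3$ form. The paper's reduction (Lemma~\ref{proj:lemma:reduction}) accordingly has \emph{three} outcomes: the parabola, the anisotropic diagonal form $X_0^2-bX_1^2-cX_2^2$, and the degenerate form $X_0^2-bX_1^2$. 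For both of the latter, the key input to the Davenport--Schmidt adaptation is not a generic bound on the integers $B(\ux^{(i)},\ux^{(j)})$ but the fact that $\varphi$ has \emph{at most one} rational zero, so that $|\varphi(\ux_i)|\ge 1$ for all large minimal points $\ux_i$; combined with the Taylor estimate $|\varphi(\ux_i)|\ll X_iL_i$ this yields $X_{i+1}^\lambda\ll X_i$, which together with the determinant inequality on independent triples gives $\lambda\le 1/\gamma$. Your existence plan in the anisotropic case---Galois-equivariant descent from a quadratic extension where $Q$ becomes isotropic---is quite different from the paper's and would need real work to make precise (why should Roy's recursion admit a Galois-equivariant lift, and why does the limit, once descended, have the right exponent measured against $\bZ^3$ rather than $\cO_K^3$?). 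The paper instead constructs sequences directly over $\bZ$: starting from three explicit points on $\{\varphi=1\}$ built from solutions of the Pell equations $x_0^2-bx_1^2=1$ and $r^2-ct^2=1$, it iterates $\uy_{i+1}=\psi(\uy_i,\uy_{i-2})$ where $\psi(\ux,\uy)=\Phi(\ux,\uy)\ux-\varphi(\ux)\uy$, and shows convergence to a point of $\Cli$ with exponent $1/\gamma$.

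You also omit the ``at most countable'' half of the last assertion. Producing countably many examples does not show the full set of extremal points is countable. The paper proves this separately in Section~\ref{sec:iii} via a rigidity result (Proposition~\ref{iii:prop:y}): whenever $\lambda\ge 0.613$, the subsequence $(\uy_k)$ of minimal points indexed by the independence set $I$ eventually satisfies $\uy_{k+1}\in\bQ\,\psi(\uy_k,\uy_{k-2})$, so the entire sequence---and hence its limit $\Xi$---is determined by finitely many initial terms.
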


Here the \emph{degree} of $\cC$ simply refers to the degree of the irreducible polynomial of $\bQ[x_1,x_2]$ defining it.  The curve $\cC_2$ is the parabola of equation $x_2-x_1^2=0$ but, as we will see, other curves are easier to deal with, for example the curve defined by $x_1^2-2=0$ which consists of the pair of vertical lines $\{\pm\sqrt{2}\}\times \bR$.  Note that, for the latter curve, Theorem \ref{intro:thm} simply says that any $\xi\in\bR\setminus\bQ(\sqrt{2})$ has $\lambda(\sqrt{2},\xi)\le 1/\gamma$, with equality defining a denumerable subset of $\bR \setminus \bQ(\sqrt{2})$.  Our main result in the next section provides a slightly finer result.

In \cite{LR}, it is shown that the cubic $\cC$ defined by $x_2-x_1^3=0$ has $\lambda(\cC) \le 2(9+\sqrt{11})/35 \cong 0.7038$, but the case of the line $\sqrt[3]{2}\times \bR$ should be simpler to solve and could give ideas to determine the precise value of $\lambda(\cC)$ for that cubic $\cC$.  Similarly, looking at lines $(\omega_2,\dots,\omega_n)\times\bR$ where $(1,\omega_2,\dots,\omega_n)$ is a basis over $\bQ$ of a number field  of degree $n$ could provide new ideas to compute $\lambda(\cC_n)$.

This paper is organized as follows. In the next section, we state a slightly stronger result in projective setting and note that, for curves $\cC$ which are irreducible over $\bR$ and contain at least one rational point, the proof simply reduces to the known case of the parabola $\cC_2$.   In Section 3, we prove the inequality $\lambda(\cC)\le 1/\gamma$ for the remaining curves $\cC$ by an adaptation of the original argument of Davenport and Schmidt in \cite[\S3]{DS}.  However, the fact that these curves have at most one rational point brings a notable simplification in the proof.  In Section 4, we adapt the arguments of \cite[\S5]{RcubicI} to establish a certain rigidity property for the sequence of minimal points attached to points $\uxi\in\Cli$ with $\lambda(\uxi)=1/\gamma$, and deduce from it that the set of these points $\uxi$ is at most countable. We conclude in Section 5, with the most delicate part, namely the existence of infinitely many points $\uxi\in\Cli$ having exponent $1/\gamma$.

%
%

\section{The main result in projective framework}
\label{sec:proj}

For each $n\ge 2$, we endow $\bR^n$ with the maximum norm, and identify its exterior square $\bigwedge^2\bR^n$ with $\bR^{n(n-1)/2}$ via an ordering of the Pl\"ucker coordinates.  In particular, when $n=3$, we define the wedge product of two vectors in $\bR^3$ as their usual cross-product.  We first introduce finer notions of Diophantine approximation in the projective context.

Let $\Xi\in\bP^n(\bR)$ and let $\uXi=(\xi_0,\dots,\xi_n)$ be a representative of $\Xi$ in $\bR^{n+1}$.  We say that a real number $\lambda\ge 0$ is an \emph{exponent of approximation} to $\Xi$ if there exists a constant $c=c_1(\uXi)$ such that the conditions
\[
 \|\ux\| \le X \et \|\ux\wedge\uXi\| \le cX^{-\lambda}
\]
admit a non-zero solution $\ux\in\bZ^{n+1}$ for each sufficiently large real number $X$.  We say that $\lambda$ is a \emph{strict exponent of approximation} to $\Xi$ if moreover there exists a constant $c=c_2(\uXi)>0$ such that the same conditions admit no non-zero solution $\ux\in\bZ^{n+1}$ for arbitrarily large values of $X$.  Both properties are independent of the choice of the representative $\uXi$, and we define $\lambda(\Xi)$ as the supremum of all exponents of approximations to $\Xi$.  Clearly, when $\lambda$ is a strict exponent of approximation to $\Xi$, we have $\lambda(\Xi)=\lambda$.

Let $T\colon \bQ^{n+1}\to \bQ^{n+1}$ be an invertible $\bQ$-linear map.  It extends uniquely to a $\bR$-linear automorphism of $\bR^{n+1}$ and then to an automorphism of $\bP^n(\bR)$.  This defines an action of $\GL_{n+1}(\bQ)$ on $\bP^n(\bR)$. Moreover, upon choosing an integer $m\ge 1$ such that $mT(\bZ^{n+1})\subseteq \bZ^{n+1}$, any non-zero point $\ux\in\bZ^{n+1}$ gives rise to a non-zero point $\uy=mT(\ux)\in\bZ^{n+1}$ satisfying
\[
 \|\uy\| \le c_T \|\ux\| \et \|\uy\wedge T(\uXi)\| \le c_T \|\ux\wedge\Xi\|
\]
for a constant $c_T>0$ depending only on $T$. Combined with the above definitions, this yields the following invariance property.

\begin{lemma}
\label{proj:lemma:T}
Let $\Xi\in\bP^n(\bR)$ and $T\in\GL_{n+1}(\bQ)$.  Then we have $\lambda(\Xi)=\lambda(T(\Xi))$.  More precisely a real number $\lambda\ge 0$ is an exponent of approximation to $\Xi$, respectively a strict exponent of approximation to $\Xi$, if and only if it is an exponent of approximation to $T(\Xi)$, respectively a strict exponent of approximation to $T(\Xi)$.
\end{lemma}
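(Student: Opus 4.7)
The key ingredient is already set up in the paragraph preceding the lemma: for any invertible $T\in\GL_{n+1}(\bQ)$ one can choose an integer $m\ge 1$ with $mT(\bZ^{n+1})\subseteq\bZ^{n+1}$, and then the map $\ux\mapsto\uy=mT(\ux)$ sends nonzero integer vectors to nonzero integer vectors and satisfies
\[
 \|\uy\|\le c_T\|\ux\|\et \|\uy\wedge T(\uXi)\|\le c_T\|\ux\wedge\uXi\|
\]
for some $c_T>0$ depending only on $T$ (and on the fixed representative $\uXi$). The plan is simply to transport solutions of the defining system for $\Xi$ across this map, and to do the symmetric thing via $T^{-1}\in\GL_{n+1}(\bQ)$.

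First I would show that every exponent of approximation $\lambda\ge 0$ to $\Xi$ is one for $T(\Xi)$. Given a large real $Y$, set $X=Y/c_T$, extract a nonzero $\ux\in\bZ^{n+1}$ with $\|\ux\|\le X$ and $\|\ux\wedge\uXi\|\le c_1(\uXi)X^{-\lambda}$, and let $\uy=mT(\ux)$. The two displayed inequalities yield $\|\uy\|\le Y$ and $\|\uy\wedge T(\uXi)\|\le c_T^{1+\lambda}c_1(\uXi)Y^{-\lambda}$, which is exactly the required estimate with constant $c_1(T(\uXi)):=c_T^{1+\lambda}c_1(\uXi)$. The reverse implication is obtained by the same reasoning applied to $T^{-1}$. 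This proves the equivalence for ordinary exponents of approximation and, by taking suprema, gives the equality $\lambda(\Xi)=\lambda(T(\Xi))$.

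For the strict part I would argue by contraposition. Suppose $\lambda$ is not a strict exponent of approximation to $T(\Xi)$: then for every $c>0$, the system $\|\uy\|\le Y$, $\|\uy\wedge T(\uXi)\|\le cY^{-\lambda}$ admits a nonzero integer solution for every sufficiently large $Y$. Transferring such $\uy$ by the analogous inequality for $T^{-1}$, and choosing $c$ small enough beforehand in terms of the constant $c_{T^{-1}}$ and $\lambda$, produces, for every prescribed $c'>0$ and every large enough $X$, a nonzero integer solution of $\|\ux\|\le X$, $\|\ux\wedge\uXi\|\le c'X^{-\lambda}$, contradicting the strictness for $\Xi$. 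Swapping the roles of $\Xi$ and $T(\Xi)$ gives the converse.

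There is no real obstacle here; the whole argument is a bookkeeping exercise with multiplicative constants. The only mild point of care is to fix $Y$ (or $X$) large and then choose the new constants in the correct order, so that the transported bound has the right shape $cY^{-\lambda}$ and not $cY^{-\lambda}$ times a factor depending on $Y$. A change of representative $\uXi\mapsto t\uXi$ only rescales $\ux\wedge\uXi$ by $t$, confirming that the notions under consideration depend only on $\Xi$ and $T(\Xi)$.
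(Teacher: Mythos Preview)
Your proposal is correct and follows exactly the approach of the paper: the paper simply states that the displayed inequalities for $\uy=mT(\ux)$, ``combined with the above definitions'', yield the lemma, and your write-up is precisely the routine unpacking of that claim (transport solutions via $T$, reverse via $T^{-1}$, and handle the strict case by contraposition with a suitably rescaled constant). There is no substantive difference between the two.
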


We also have a natural embedding of $\bR^n$ into $\bP^n(\bR)$, sending a point $\uxi=(\xi_1,\dots,\xi_n)$ to $(1:\uxi):=(1:\xi_1:\cdots:\xi_n)$.  Identifying $\bR^n$ with its image in $\bP^n(\bR)$, the above notions of exponent of approximation and strict exponent of approximation carry back to points of $\bR^n$.  The next lemma, whose proof is left to the reader, shows how they translate in this context and shows moreover that $\lambda(\uxi)=\lambda(1:\uxi)$, thus leaving no ambiguity as to the value of $\lambda(\uxi)$.

\begin{lemma}
\label{proj:lemma:affine_exponents}
Let $\uxi=(\xi_1,\dots,\xi_n)\in\bR^n$.
\begin{itemize}
 \item[(i)] A real number $\lambda\ge 0$ is an exponent of approximation to $(1:\uxi)$ if and only if there exists a constant $c=c_1(\uxi)$ such that the conditions
     \begin{equation*}
       \label{proj:eq2}
       |x_0| \le X \et \max_{1\le i\le n} |x_0\xi_i-x_i| \le cX^{-\lambda}
     \end{equation*}
     admit a non-zero solution $\ux=(x_0,\dots,x_n)\in\bZ^{n+1}$ for each sufficiently large $X$.
 \item[(ii)] It is a strict exponent of approximation to $(1:\uxi)$ if and only if there also exists a constant $c=c_2(\uxi)>0$ such that the above conditions admit no non-zero integer solution for arbitrarily large values of $X$.
\end{itemize}
Finally, we have $\lambda(\uxi)=\lambda(1:\uxi)$.
\end{lemma}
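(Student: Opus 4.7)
The plan is to pick the standard representative $\uXi=(1,\xi_1,\dots,\xi_n)$ of $(1:\uxi)$ and compute the Pl\"ucker coordinates of $\ux\wedge\uXi$ for a general vector $\ux=(x_0,x_1,\dots,x_n)\in\bZ^{n+1}$.  These are the $n$ quantities $x_0\xi_i-x_i$ (from the rows $0$ and $i$) together with the $\binom{n}{2}$ quantities $x_i\xi_j-x_j\xi_i$ (from rows $i<j$, $1\le i<j\le n$).  The first batch is exactly the affine approximation data, while the second will be controlled in terms of the first.

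For part (i), the forward implication is transparent: if $\|\ux\|\le X$ and $\|\ux\wedge\uXi\|\le cX^{-\lambda}$, then $|x_0|\le\|\ux\|\le X$ and each $|x_0\xi_i-x_i|$ is a single Pl\"ucker coordinate, hence bounded by $cX^{-\lambda}$.  For the converse, I would use the algebraic identity
\[
 x_i\xi_j-x_j\xi_i=\xi_i(x_0\xi_j-x_j)-\xi_j(x_0\xi_i-x_i),
\]
which gives $|x_i\xi_j-x_j\xi_i|\le 2\|\uxi\|\cdot\max_k|x_0\xi_k-x_k|$, together with the trivial bound $|x_i|\le|x_0|\cdot\|\uxi\|+|x_0\xi_i-x_i|$, yielding $\|\ux\|\le(1+\|\uxi\|)X+cX^{-\lambda}\le c''X$ for $X\ge 1$ and a suitable $c''=c''(\uxi,c)$.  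Setting $Y=c''X$ then delivers the projective inequalities with a new constant $c_1$ for every sufficiently large $Y$, as required.

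Part (ii) follows by running the same estimates in contrapositive form: a ``large $X$'' at which no non-zero integer solution exists in the affine setting corresponds, via the rescaling $Y=c''X$, to a ``large $Y$'' at which no such solution exists in the projective setting, and vice versa, with the constant $c_2$ adjusted accordingly.  For the final assertion $\lambda(\uxi)=\lambda(1:\uxi)$, I would compare the definition from Section~\ref{sec:intro} (which fixes $c=1$) with part (i) (which allows an arbitrary constant): the inequality $\lambda(\uxi)\le\lambda(1:\uxi)$ is immediate, while the reverse uses that $cX^{-\lambda}\le X^{-(\lambda-\epsilon)}$ as soon as $X\ge c^{1/\epsilon}$, so any exponent $\lambda$ of approximation to $(1:\uxi)$ is realised with constant $1$ at the cost of an arbitrarily small loss $\epsilon$, whence $\lambda(\uxi)\ge\lambda-\epsilon$ for every $\epsilon>0$.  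The only real obstacle is the bookkeeping of constants and of the rescaling of $X$, so that ``for each sufficiently large $X$'' is preserved on both sides; no conceptual difficulty arises beyond the identity displayed above.
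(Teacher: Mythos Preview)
Your proposal is correct. The paper itself leaves the proof of this lemma to the reader, so there is no written argument to compare against; your approach via the explicit Pl\"ucker coordinates of $\ux\wedge\uXi$ and the identity $x_i\xi_j-x_j\xi_i=\xi_i(x_0\xi_j-x_j)-\xi_j(x_0\xi_i-x_i)$ is exactly the standard verification that the author had in mind, and your handling of the constant in the final equality $\lambda(\uxi)=\lambda(1:\uxi)$ is the right one.
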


Our main result is the following strengthening of Theorem \ref{intro:thm}.

\begin{theorem}
\label{proj:thm}
Let $\varphi$ be a homogeneous polynomial of degree $2$ in $\bQ[x_0,x_1,x_2]$.  Suppose that $\varphi$ is irreducible over $\bQ$ and that its set of zeros $\cC$ in $\bP^2(\bR)$ consists of at least two points.
\begin{enumerate}
 \item[(i)] For each point $\Xi\in \cC$ having $\bQ$-linearly independent homogeneous coordinates, the number $1/\gamma$ is at best a strict exponent of approximation to $\Xi$: if it is an exponent of approximation to $\Xi$, it is a strict one.
 \item[(ii)] There are infinitely many points $\Xi\in \cC$ which have $\bQ$-linearly independent homogeneous coordinates and for which $1/\gamma$ is an exponent of approximation.
 \item[(iii)] There exists a positive $\epsilon$, independent of $\varphi$, such that the set of points $\Xi\in\cC$ with $\lambda(\Xi) > 1/\gamma-\epsilon$ is countable.
\end{enumerate}
\end{theorem}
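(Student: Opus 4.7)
The approach is to exploit the $\GL_3(\bQ)$-invariance provided by Lemma~\ref{proj:lemma:T} to reduce $\varphi$ to a manageable form, and then split into two cases. Since $\varphi$ is $\bQ$-irreducible and $\cC$ contains at least two real points, we face either Case~A, in which $\cC$ is absolutely irreducible and admits a rational point $R$, or Case~B, in which $\cC$ carries at most one rational point (this covers both smooth $\bR$-irreducible conics with no rational point and pairs of Galois-conjugate real lines, whose unique rational point is their intersection). In Case~A, projection from $R$ together with a $\bQ$-linear change of coordinates brings $\varphi$ to the parabola $x_0x_2-x_1^2=0$, so points $\Xi\in\cC$ with $\bQ$-linearly independent homogeneous coordinates correspond to $(1:\xi:\xi^2)$ with $1,\xi,\xi^2$ linearly independent over $\bQ$, and the three parts of Theorem~\ref{proj:thm} become the strengthened Davenport--Schmidt bound together with the existence of infinitely many extremal numbers proved in \cite{DS, Rnote, RcubicI}. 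The substance of the proof is therefore to settle Case~B directly.

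For part~(i) in Case~B, I would adapt \cite[\S3]{DS}. Attach to $\Xi$ a sequence of minimal integer points $(\ux_i)_{i\ge 1}\subset\bZ^3$, and set $X_i=\|\ux_i\|$ and $L_i=\|\ux_i\wedge\uXi\|$. Let $B_\varphi$ be the symmetric bilinear form polarizing $\varphi$. Since $\varphi(\uXi)=0$, one has $|B_\varphi(\ua,\uv)|\ll\|\ua\|\,\|\uv\wedge\uXi\|+\|\uv\|\,\|\ua\wedge\uXi\|$, and in particular $|\varphi(\ux_i)|\ll X_iL_i$. Because $\cC$ has at most one rational point and, from some index onward, no $\ux_i$ is proportional to that point, $\varphi(\ux_i)$ is a nonzero integer and hence $\ge 1$. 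Combined with the fact that any three linearly independent consecutive minimal points have determinant of absolute value $\ge 1$, while this determinant is bounded above by a sum of products $X_jL_k$, one obtains a Fibonacci-type constraint on $(X_i,L_i)$ that forces $\lambda(\Xi)\le 1/\gamma$, together with the strictness assertion. For part~(ii), I would construct the required infinite family along the lines of Section~5, transferring the extremal-number construction of \cite{Rnote,RcubicI} to Case~B: iterated products of matrices indexed by Fibonacci-like words inside the $\bQ$-automorphism group of $\cC$ (a norm-one torus of a real quadratic field in the smooth subcase, a $\bQ$-split torus acting on each line in the reducible subcase) produce sequences of integer points realizing the golden-ratio scaling and so a point $\Xi\in\Cli$ with $\lambda(\Xi)=1/\gamma$; varying the coding yields infinitely many such $\Xi$.

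For part~(iii), I would prove a rigidity statement in the spirit of \cite[\S5]{RcubicI}: there is an absolute $\epsilon>0$ such that, for every admissible $\varphi$ and every $\Xi\in\cC$ with $\lambda(\Xi)>1/\gamma-\epsilon$, the sequence of minimal points of $\Xi$ is governed by an explicit three-term recurrence and is therefore determined, up to finitely many choices, by its first few terms, which lie in a countable set. The hard part—and what I expect to be the most delicate point of the whole paper—is making $\epsilon$ uniform in $\varphi$: the constants in the bilinear-form inequalities depend a priori on $\varphi$, and a uniform $\epsilon$ requires first normalizing $\varphi$ within its $\GL_3(\bQ)$-orbit to a bounded finite list of canonical shapes (diagonal ternary forms in the smooth no-rational-point subcase, and the split form $x_1^2-dx_0^2$ with squarefree $d$ in the reducible subcase) and then checking that the recurrence argument runs with constants depending only on these normal forms. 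Once this uniformity is secured, the rigidity argument produces only countably many $\Xi$ on each $\cC$, completing the proof.
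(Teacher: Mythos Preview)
Your overall architecture matches the paper: reduce via $\GL_3(\bQ)$-invariance and Lemma~\ref{proj:lemma:reduction}, dispose of the absolutely-irreducible-with-rational-point case by citing \cite{DS,RcubicI}, and treat the remaining case directly. Your sketch of Part~(i) is essentially the paper's Section~\ref{sec:i}.

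However, you have misidentified the main difficulty. You flag uniformity of $\epsilon$ in $\varphi$ as ``the most delicate point of the whole paper,'' proposing to normalize $\varphi$ to a bounded finite list of shapes. No such finite list exists (Lemma~\ref{proj:lemma:reduction} still leaves infinite families parametrized by squarefree $b,c$), and more importantly the concern is unfounded. The paper's rigidity argument (Section~\ref{sec:iii}, Proposition~\ref{iii:prop:y}) runs for any $\lambda\ge 0.613$, and this threshold is \emph{automatically} uniform in $\varphi$: the implicit constants in Lemmas~\ref{iii:lemma:estimates:psi} and~\ref{iii:lemma:estX} do depend on $\varphi$ and $\uXi$, but the number $0.613$ is determined solely by making certain combinations of the exponents $\alpha=(2\lambda-1)/(1-\lambda)$ and $\theta=(1-\lambda)/\lambda$ negative (e.g.\ $-\alpha+\theta^2(2-\alpha^2-\alpha)<0$). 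Once those exponents are negative, the relevant determinants tend to zero regardless of the size of the constants, the recurrence $\uy_{k+1}\in\bQ\,\psi(\uy_k,\uy_{k-2})$ holds from some index on, and countability follows because the tail is determined by finitely many initial terms drawn from $\bZ^3$.

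The paper explicitly names Part~(ii) as ``the most delicate part,'' and here your proposal is vague and diverges from what is actually done. You suggest Fibonacci words in the $\bQ$-automorphism group of $\cC$, but your description of that group (``norm-one torus of a real quadratic field in the smooth subcase, a $\bQ$-split torus acting on each line in the reducible subcase'') is garbled, and for a smooth conic with no rational point it is not clear this yields the needed integer-point sequences. The paper instead reuses the map $\psi(\ux,\uy)=\Phi(\ux,\uy)\ux-\varphi(\ux)\uy$ from Section~\ref{sec:iii} and builds $(\uy_i)_{i\ge -1}$ via the recursion $\uy_{i+1}=\psi(\uy_i,\uy_{i-2})$, seeding with three explicit points on the affine quadric $\varphi=1$ obtained from Pell equations for the diagonal form $x_0^2-bx_1^2-cx_2^2$. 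The identity of Lemma~\ref{iii:lemma:identities:psi} forces $\varphi(\uy_i)=1$ throughout; Lemmas~\ref{ii:lemma:alg}--\ref{ii:lemma:Xi} give the golden-ratio growth and the limit point $\Xi$; and varying the Pell seeds together with the uniqueness Lemma~\ref{ii:lemma:unicity} yields infinitely many distinct $\Xi$.
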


To show that this implies Theorem \ref{intro:thm}, let $\cC$ be as in latter statement. Then, the Zariski closure $\Cbar$ of $\cC$ in $\bP^2(\bR)$ is infinite and is the zero set of an irreducible homogeneous polynomial of degree $2$ in $\bQ[x_0,x_1,x_2]$.  Moreover, $\Cli$ identifies with the set of elements of $\Cbar$ with $\bQ$-linearly independent homogeneous coordinates.  So, if we admit the above theorem, then, in view of Lemma \ref{proj:lemma:affine_exponents}, Part (i) implies that $\lambda(\cC)\le 1/\gamma$, Part (ii) shows that there are infinitely many $\uxi\in\Cli$ with $\lambda(\uxi)=1/\gamma$, and Part (iii) shows that the set of points $\uxi\in\cC$ with $\lambda(\uxi)>1/\gamma-\epsilon$ is countable. Altogether, this proves Theorem \ref{intro:thm}.

The proof of Part (iii) in Section \ref{sec:iii} will show that one can take $\epsilon=0.005$ but the optimal value for $\epsilon$ is probably much larger.  In connection to (iii), we also note that the set of elements of $\cC$ with $\bQ$-linearly dependent homogeneous coordinates is at most countable because each such point belongs to a proper linear subspace of $\bP^2(\bR)$ defined over $\bQ$, there are countably many such subspaces, and each of them meets $\cC$ in at most two points.  So, in order to prove (iii), we may restrict to the points of $\cC$ with $\bQ$-linearly independent homogeneous coordinates.

Lemma \ref{proj:lemma:T} implies that, if Theorem \ref{proj:thm} holds true for a form $\varphi$, then it also holds for $\mu(\varphi\circ T)$ for any $T\in\GL_3(\bQ)$ and any $\mu\in\bQ^*$.  Thus the next lemma reduces the proof of the theorem to forms of special types.

\begin{lemma}
\label{proj:lemma:reduction}
Let $\varphi$ be an irreducible homogeneous polynomial of $\bQ[x_0,x_1,x_2]$ of degree $2$ which admits at least two zeros in $\bP^2(\bR)$.
\begin{itemize}
 \item[(i)] If $\varphi$ is irreducible over $\bR$ and admits at least one zero in $\bP^2(\bQ)$, then there exist $\mu\in\bQ^*$ and $T\in\GL_3(\bQ)$ such that $\mu(\varphi\circ T)(x_0,x_1,x_2)=x_0x_2-x_1^2$.
 \item[(ii)] If $\varphi$ is not irreducible over $\bR$, then it admits exactly one zero in $\bP^2(\bQ)$ and there exist $\mu\in\bQ^*$ and $T\in\GL_3(\bQ)$ such that $\mu(\varphi\circ T)(x_0,x_1,x_2)=x_0^2-bx_1^2$ for some square-free integer $b>1$.
 \item[(iii)] If $\varphi$ has no zero in $\bP^2(\bQ)$, then there exist $\mu\in\bQ^*$ and $T\in\GL_3(\bQ)$ such that $\mu(\varphi\circ T)(x_0,x_1,x_2)=x_0^2-bx_1^2-cx_2^2$ for some square-free integers $b>1$ and $c>1$.
\end{itemize}
\end{lemma}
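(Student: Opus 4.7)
My plan is to perform explicit $\bQ$-linear changes of variables in each of the three cases, exploiting the diagonalizability of quadratic forms over $\bQ$ and the freedom to multiply $\varphi$ by any nonzero rational scalar. The organizing invariants are the rank of $\varphi$ as a form on $\bQ^3$ and the existence of a $\bQ$-rational zero. Rank $1$ is incompatible with irreducibility (a rank-$1$ form is $c\ell^2$), so $\varphi$ has rank $2$ or $3$. Rank $2$ is exactly the situation where $\varphi$ factors over some quadratic extension; combined with the presence of at least two real zeros and irreducibility over $\bQ$, this forces the two linear factors to be real and $\bQ$-conjugate, putting us in case (ii). Rank $3$ means $\varphi$ is irreducible over $\bR$ and yields a smooth real conic; cases (i) and (iii) then distinguish whether this conic carries a rational point.

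For case (i), I would first send a rational zero of $\varphi$ to $(1:0:0)$ by a $\bQ$-linear change, yielding $\varphi = x_0 L(x_1,x_2) + Q(x_1,x_2)$ with $L \neq 0$ by non-degeneracy. Taking $L$ as a new coordinate gives $\varphi = y_0 y_2 + Q'(y_1,y_2)$; a single shear on $y_0$ absorbs every monomial of $Q'$ involving $y_2$, leaving $z_0 z_2 + c_1 z_1^2$ with $c_1 \neq 0$, and the scalar $\mu = -1/c_1$ together with a rescaling of $z_0$ produces $x_0 x_2 - x_1^2$. For case (ii), I diagonalize $\varphi$ over $\bQ$ on its rank-$2$ image as $\alpha A^2 + \beta B^2$; $\bR$-reducibility forces $-\beta/\alpha > 0$ and $\bQ$-irreducibility forces $-\beta/\alpha$ to be a non-square in $\bQ$. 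Writing $-\beta/\alpha = r^2 b$ with $b > 1$ square-free, rescaling $B$ by $r$ and multiplying $\varphi$ by $1/\alpha$ delivers $x_0^2 - b x_1^2$; since $b$ is a non-square, the only rational zero is $(0:0:1)$. For case (iii), the no-rational-zero hypothesis rules out rank $2$ (a rank-$2$ form always vanishes at the rational point where both of its diagonalized coordinates vanish), so $\varphi$ has rank $3$. Diagonalization combined with the indefinite signature arranges $\varphi$ proportional to $y_0^2 - \gamma_1 y_1^2 - \gamma_2 y_2^2$ with positive rational $\gamma_i$; rescaling each $y_i$ converts $\gamma_i$ into a square-free positive integer, yielding $x_0^2 - b x_1^2 - c x_2^2$. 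If $b = 1$, then $(1:1:0)$ would be a rational zero, contradicting the hypothesis; similarly $c > 1$.

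The arguments are elementary, so no single step is a serious obstacle. The one place requiring care is the opening organization by rank: I must verify that the stated hypotheses genuinely partition into ranks $2$ and $3$, and that in rank $2$ a rational zero is automatic, because the two real linear factors meet at a Galois-stable and therefore $\bQ$-rational point. Beyond that, every step is a standard rescaling or completion of squares, and nothing deeper than the $\bQ$-diagonalization of rational quadratic forms enters the proof.
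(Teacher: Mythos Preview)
Your proposal is correct and follows essentially the same strategy as the paper: organize by the rank of $\varphi$ (equivalently, the dimension of its kernel), then diagonalize over $\bQ$ and adjust by rational scalars and rescalings. The only cosmetic differences are in case (i), where the paper invokes the Witt decomposition (isotropic vector $\Rightarrow$ hyperbolic plane $\perp$ line) while you carry out the equivalent reduction by an explicit shear, and in case (ii), where the paper first factors $\varphi$ over $\bQ(\sqrt{d})$ to identify the unique rational zero with the kernel line, whereas you read this off directly from the diagonal form $\alpha A^2+\beta B^2$; both routes yield the same normal forms.
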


\begin{proof} We view $(\bQ^3,\varphi)$ as a quadratic space.  We denote by $K$ its kernel, and by $\Phi$ the unique symmetric bilinear form such that $\Phi(\ux,\ux)=2\varphi(\ux)$.

Suppose first that $K\neq \{0\}$.  Then, by a change of variables over $\bQ$, we can bring $\varphi$ to a diagonal form $rx_0^2+sx_1^2$ with $r,s\in\bQ$.  We have $rs\neq 0$ since $\varphi$ is irreducible over $\bQ$, and furthermore $rs<0$ since otherwise the point $(0:0:1)$ would be the only zero of $\varphi$ in $\bP^2(\bR)$.  Thus, $\varphi$ is not irreducible over $\bR$, and $\dim_\bQ K=1$.

In the case (i), the above observation shows that $\bQ^3$ is non-degenerate.  Then, since $\varphi$ has a zero in $\bP^2(\bQ)$, the space $\bQ^3$ decomposes as the orthogonal direct sum of a hyperbolic plane $H$ and a non-degenerate line $P$.  We choose bases $\{\uv_0,\uv_2\}$ for $H$ and $\{\uv_1\}$ for $P$ such that $\varphi(\uv_0)=\varphi(\uv_2)=0$ and $\Phi(\uv_0,\uv_2)=-\varphi(\uv_1)$.  Then $\mu=-1/\varphi(\uv_1)$ and the linear map $T\colon\bQ^3\to\bQ^3$ sending the canonical basis of $\bQ^3$ to $(\uv_0,\uv_1,\uv_2)$ have the property stated in (i).

In the case (iii), we have $K=\{0\}$ and so we can write $\bQ^3$ as an orthogonal direct sum of one-dimensional non-degenerate subspaces $P_0$, $P_1$ and $P_2$.  We order them so that the non-zero values of $\varphi$ on $P_0$ have opposite sign to those on $P_1$ and $P_2$.  This is possible since $\varphi$ is indefinite.  Let $\{\uv_0\}$ be a basis of $P_0$ and put $\mu=1/\varphi(\uv_0)$.  For $i=1,2$, we can choose a basis $\{\uv_i\}$ of $P_i$ such that $\mu\varphi(\uv_i)$ is a square-free integer.  Then $\mu$ and the linear map $T\colon\bQ^3\to\bQ^3$ sending the canonical basis of $\bQ^3$ to $(\uv_0,\uv_1,\uv_2)$ have the property stated in (iii).

In the case (ii), the form $\varphi$ factors over a quadratic extension $\bQ(\sqrt{d})$ of $\bQ$ as a product $\varphi(\ux)=\rho L(\ux)\Lbar(\ux)$ where $L$ is a linear form, $\Lbar$ its conjugate over $\bQ$, and $\rho\in\bQ^*$.  As $\varphi$ is irreducible over $\bQ$, the linear forms $L$ and $\Lbar$ are not multiple of each other.  Moreover, for a point $\ua\in\bQ^3$, we have
\[
 \varphi(\ua)=0 \ssi L(\ua)=\Lbar(\ua)=0 \ssi (L+\Lbar)(\ua)=\sqrt{d}(L-\Lbar)(\ua)=0.
\]
Since $L+\Lbar$ and $\sqrt{d}(L-\Lbar)$ are linearly independent forms with coefficients in $\bQ$, this means that the zero set of $\varphi$ in $\bQ^3$ is a line, and so $\varphi$ has a unique zero in $\bP^2(\bQ)$.  As $\Phi(\ux,\uy) = \rho L(\ux)\Lbar(\uy) + \rho\Lbar(\ux)L(\uy)$, this line is contained in the kernel $K$ of $\varphi$, and so is equal to $K$.  By an earlier observation, this means that, by a change of variables over $\bQ$, we may bring $\varphi$ to a diagonal form $rx_0^2+sx_1^2$ with $r,s\in\bQ$, $rs<0$.  We may further choose $r$ and $s$ so that $-s/r$ is a square-free integer $b>0$.  Then, the same change of variables brings $r^{-1}\varphi$ to $x_0^2-bx_1^2$.  Finally, we have $b\neq 1$ since $\varphi$ is irreducible over $\bQ$.
\end{proof}

%
%

\section{Proof of the first part of the main theorem}
\label{sec:i}

Let $\varphi$ and $\cC$ be as in the statement of Theorem \ref{proj:thm}.  Suppose first that $\varphi$ is irreducible over $\bR$ and that $\cC\cap\bP^2(\bQ)\neq \emptyset$.  Then, by Lemma \ref{proj:lemma:reduction}, there exists $T\in\GL_3(\bQ)$ such that $T^{-1}(\cC)$ is the zero-set in $\bP^2(\bR)$ of the polynomial $x_0x_2-x_1^2$.  Let $\Xi$ be a point of $\cC$ with $\bQ$-linearly independent homogeneous coordinates.  Its image $T^{-1}(\Xi)$ has homogeneous coordinates $(1:\xi:\xi^2)$, for some irrational non-quadratic $\xi\in\bR$.  Then, by \cite[Theorem 1a]{DS}, the number $1/\gamma$ is at best a strict exponent of approximation to $T^{-1}(\Xi)$, and, by Lemma \ref{proj:lemma:T}, the same applies to $\Xi$.  This proves Part (i) of the theorem in that case.

Otherwise, Lemma \ref{proj:lemma:reduction} shows that $\varphi$ has at most one zero in $\bP^2(\bQ)$.  Taking advantage of the major simplification that this entails, we proceed as Davenport and Schmidt in \cite[\S3]{DS}.  We fix a point $\Xi\in\cC$ with $\bQ$-linearly independent homogeneous coordinates $(1:\xi_1:\xi_2)$ and an exponent of approximation $\lambda\ge 1/2$ for $\Xi$.  Then, by Lemma \ref{proj:lemma:affine_exponents}, there exists a constant $c>0$ such that, for each sufficiently large $X$, the system
\begin{equation}
 \label{i:eq:L}
 |x_0|\le X, \quad L(\ux):=\max\{|x_0\xi_1-x_1|,|x_0\xi_2-x_2|\} \le cX^{-\lambda}
\end{equation}
has a non-zero solution $\ux=(x_0,x_1,x_2)\in\bZ^3$.  To prove Part (i) of Theorem \ref{proj:thm}, we simply need to show that $\lambda\le 1/\gamma$ and that, when $\lambda=1/\gamma$, the constant $c$ cannot be chosen arbitrarily small.

To this end, we first note that there exists a sequence of points $(\ux_i)_{i\ge 1}$ in $\bZ^3$ such that
\begin{itemize}
\item[(a)] their first coordinates $X_i$ form an increasing sequence $1 \le X_1<X_2<X_3<\cdots$,
\item[(b)] the quantities $L_i:=L(\ux_i)$ form a decreasing sequence $1>L_1>L_2>L_3>\cdots$,
\item[(c)] for each $\ux=(x_0,x_1,x_2)\in\bZ^3$ and each $i\ge 1$ with $|x_0| < X_{i+1}$, we have $L(\ux)\ge L_i$.
\end{itemize}
Then, each $\ux_i$ is a \emph{primitive} point of $\bZ^3$, by which we mean that the gcd of its coordinates is $1$.  Moreover, the hypothesis that \eqref{i:eq:L} has a solution for each large enough $X$ implies that
\begin{equation}
 \label{i:eq:Lbis}
 L_i \le cX_{i+1}^{-\lambda}
\end{equation}
for each sufficiently large $i$, say for all $i\ge i_0$.  Since $\varphi$ has at most one zero in $\bP^2(\bQ)$, we may further assume that $\varphi(\ux_i)\neq 0$ for each $i\ge i_0$.  Then, upon normalizing $\varphi$ so that it has integer coefficients, we conclude that $|\varphi(\ux_i)|\ge 1$ for the same values of $i$.

Put $\uXi=(1,\xi_1,\xi_2)\in\bQ^3$, and let $\Phi$ denote the symmetric bilinear form for which $\Phi(\ux,\ux)=2\varphi(\ux)$.  Then, upon writing $\ux_i=X_i\uXi+\Delta_i$ and noting that $\varphi(\uXi)=0$, we find
\begin{equation}
 \label{i:eq:varphi}
 \varphi(\ux_i) = X_i\Phi(\uXi,\Delta_i)+\varphi(\Delta_i).
\end{equation}
As $\|\Delta_i\|=L_i$, this yields $|\varphi(\ux_i)|\le c_1X_iL_i$ for a constant $c_1=c_1(\varphi,\uXi)>0$. Using \eqref{i:eq:Lbis}, we conclude that, for each $i\ge i_0$, we have $1\le |\varphi(\ux_i)| \le cc_1X_iX_{i+1}^{-\lambda}$, and so
\begin{equation}
 \label{i:eq:Xip}
 X_{i+1}^{\lambda}\le cc_1 X_i.
\end{equation}

We also note that there are infinitely many values of $i>i_0$ for which $\ux_{i-1}$, $\ux_i$ and $\ux_{i+1}$ are linearly independent.  For otherwise, all points $\ux_i$ with $i$ large enough would lie in a two dimensional subspace $V$ of $\bR^3$ defined over $\bQ$.  As the products $X_i^{-1}\ux_i$ converge to $\uXi$ when $i\to\infty$, this would imply that $\uXi\in V$, in contradiction with the hypothesis that $\uXi$ has $\bQ$-linearly independent coordinates.  Let $I$ denote the set of these indices $i$.

For $i\in I$, the integer $\det(\ux_{i-1},\ux_i,\ux_{i+1})$ is non-zero and \cite[Lemma 4]{DS} yields
\[
 1 \le |\det(\ux_{i-1},\ux_i,\ux_{i+1})|
   \le 6 X_{i+1}L_iL_{i-1}
   \le 6c^2 X_{i+1}^{1-\lambda} X_i^{-\lambda},
\]
thus $X_i^\lambda \le 6c^2 X_{i+1}^{1-\lambda}$.  Combining this with \eqref{i:eq:Xip}, we deduce that $X_i^{\lambda^2} \le (6c^2)^\lambda(cc_1X_i)^{1-\lambda}$ for each $i\in I$, thus $\lambda^2\le 1-\lambda$ and so $\lambda\le 1/\gamma$.  Moreover, if $\lambda=1/\gamma$, this yields $1\le 6c^2(cc_1)^{1/\gamma}$, and so $c$ is bounded below by a positive constant depending only on $\varphi$ and $\uXi$.

%
%

\section{Proof of the third part of the main theorem}
\label{sec:iii}

The arguments in \cite[\S5]{RcubicI} can easily be adapted to show that, for some $\epsilon>0$ there are at most countably many irrational non-quadratic $\xi\in\bR$ with $\lambda(1:\xi:\xi^2)\ge 1/\gamma-\epsilon$.  This is, originally, an observation of S.~Fischler who, in unpublished work, also computed an explicit value for $\epsilon$.  The question was later revisited by D.~Zelo who showed in \cite[Cor.~1.4.7]{Z} that one can take $\epsilon=3.48\times 10^{-3}$, and who also proved a $p$-adic analog of this result.  More recently, the existence of such $\epsilon$ was established by P.~Bel, in a larger context where $\bQ$ is replaced by a number field $K$, and $\bR$ by a completion of $K$ at some place \cite[Theorem 1.3]{B}.  By Lemmas \ref{proj:lemma:T} and \ref{proj:lemma:reduction} (i), this proves Theorem \ref{proj:thm} (iii) when $\varphi$ is irreducible over $\bR$ and has a non-trivial zero in $\bP^2(\bQ)$.

We now consider the complementary case.  Using the notation and results of the previous section, we need to show that, when $\lambda$ is sufficiently close to $1/\gamma$, the point $\Xi$ lies in a countable subset of $\cC$.  For this purpose, we may assume that $\lambda>1/2$.  The next two lemmas introduce a polynomial $\psi(\ux,\uy)$ with both algebraic and numerical properties analog to that of the operator $[\ux,\ux,\uy]$ from \cite[\S2]{RcubicI} (cf. Lemmas 2.1 and 3.1(iii) of \cite{RcubicI}).

\begin{lemma}
\label{iii:lemma:identities:psi}
For any $\ux,\uy\in\bZ^3$, we define
\[
 \psi(\ux,\uy) := \Phi(\ux,\uy)\ux-\varphi(\ux)\uy \in\bZ^3
\]
Then, $\uz = \psi(\ux,\uy)$ satisfies $\varphi(\uz)=\varphi(\ux)^2\varphi(\uy)$ and $\psi(\ux,\uz)=\varphi(\ux)^2\uy$.
\end{lemma}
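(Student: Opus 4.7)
The plan is a direct verification using only bilinearity of $\Phi$, its symmetry, and the defining relation $\Phi(\ua,\ua)=2\varphi(\ua)$. Integrality of $\psi(\ux,\uy)$ is immediate: since $\varphi$ has been normalized to have integer coefficients, the values $\varphi(\ux)$ and $\Phi(\ux,\uy)=\varphi(\ux+\uy)-\varphi(\ux)-\varphi(\uy)$ are integers, so $\Phi(\ux,\uy)\ux-\varphi(\ux)\uy$ lies in $\bZ^3$.

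For the first identity I would set $\uz=\Phi(\ux,\uy)\ux-\varphi(\ux)\uy$ and expand $2\varphi(\uz)=\Phi(\uz,\uz)$ by bilinearity into three terms: $\Phi(\ux,\uy)^2\,\Phi(\ux,\ux)$, a cross term equal to $-2\,\Phi(\ux,\uy)\,\varphi(\ux)\,\Phi(\ux,\uy)$, and $\varphi(\ux)^2\,\Phi(\uy,\uy)$. Replacing $\Phi(\ux,\ux)$ by $2\varphi(\ux)$ and $\Phi(\uy,\uy)$ by $2\varphi(\uy)$, the first two terms cancel and what remains is exactly $2\,\varphi(\ux)^2\,\varphi(\uy)$, giving $\varphi(\uz)=\varphi(\ux)^2\varphi(\uy)$.

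For the second identity, I would substitute the expression for $\uz$ into the definition $\psi(\ux,\uz)=\Phi(\ux,\uz)\ux-\varphi(\ux)\uz$. Using bilinearity, $\Phi(\ux,\uz)=\Phi(\ux,\uy)\,\Phi(\ux,\ux)-\varphi(\ux)\,\Phi(\ux,\uy)=\varphi(\ux)\,\Phi(\ux,\uy)$, after applying $\Phi(\ux,\ux)=2\varphi(\ux)$. Hence the $\ux$-coefficient of $\psi(\ux,\uz)$ equals $\varphi(\ux)\,\Phi(\ux,\uy)$, which is precisely the $\ux$-coefficient of $\varphi(\ux)\uz$; these contributions cancel, and what remains is $\varphi(\ux)\cdot\varphi(\ux)\uy=\varphi(\ux)^2\uy$.

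There is no real obstacle here: the lemma is a purely algebraic identity, and the only minor care needed is to keep track of the factor of $2$ coming from $\Phi(\ua,\ua)=2\varphi(\ua)$ when comparing $\varphi$-values with $\Phi$-values. The whole argument is formal and parallels the computation for the trilinear operator $[\ux,\ux,\uy]$ used in \cite[\S2]{RcubicI}, the point being that $\psi(\ux,\cdot)$ behaves like an ``almost-involution'' on the quadric $\varphi=0$, with $\psi(\ux,-)\circ\psi(\ux,-)$ acting as multiplication by $\varphi(\ux)^2$.
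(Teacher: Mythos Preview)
Your proof is correct and follows essentially the same approach as the paper: a direct expansion using bilinearity of $\Phi$ and the relation $\Phi(\ua,\ua)=2\varphi(\ua)$. The paper packages the first computation as the substitution $a=\Phi(\ux,\uy)$, $b=-\varphi(\ux)$ into $\varphi(a\ux+b\uy)=a^2\varphi(\ux)+ab\Phi(\ux,\uy)+b^2\varphi(\uy)$, and for the second simply invokes linearity of $\psi$ in its second argument, but the underlying algebra is identical to what you wrote out.
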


\begin{proof}
For any $a,b\in\bQ$, we have $\varphi(a\ux+b\uy)=a^2\varphi(\ux)+ab\Phi(\ux,\uy)+b^2\varphi(\uy)$.  Substituting $a=\Phi(\ux,\uy)$ and $b=-\varphi(\ux)$ in this equality yields $\varphi(\uz)=\varphi(\ux)^2\varphi(\uy)$.  The formula for $\psi(\ux,\uz)$ follows from the linearity of $\psi$ in its second argument.
\end{proof}

\begin{lemma}
\label{iii:lemma:estimates:psi}
Let $i,j\in\bZ$ with $i_0\le i<j$.  Then, the point $\uw = \psi(\ux_i,\ux_j)\in\bZ^3$ is non-zero and satisfies
\[
 \|\uw\| \ll X_i^2L_j + X_jL_i^2
 \et
 L(\uw) \ll X_j L_i^2.
\]
\end{lemma}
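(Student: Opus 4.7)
The plan is to prove non-vanishing of $\uw$ via the algebraic identity of Lemma \ref{iii:lemma:identities:psi}, and then to obtain the two numerical bounds by decomposing $\ux_i$ and $\ux_j$ along $\uXi=(1,\xi_1,\xi_2)$.

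For non-vanishing, Lemma \ref{iii:lemma:identities:psi} gives $\varphi(\uw)=\varphi(\ux_i)^2\varphi(\ux_j)$, and this is non-zero because, as observed in Section \ref{sec:i}, $|\varphi(\ux_k)|\ge 1$ for every $k\ge i_0$ (after normalizing $\varphi$ to have integer coefficients).

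For the size estimates, write $\ux_i=X_i\uXi+\Delta_i$ and $\ux_j=X_j\uXi+\Delta_j$, where $\Delta_i,\Delta_j$ have zero first coordinate and satisfy $\|\Delta_i\|=L_i$, $\|\Delta_j\|=L_j$.  Using $\varphi(\uXi)=0$ and $\Phi(\uXi,\uXi)=0$, a short expansion yields
\begin{equation*}
  \varphi(\ux_i) = X_i\Phi(\uXi,\Delta_i)+\varphi(\Delta_i), \quad
  \Phi(\ux_i,\ux_j) = X_i\Phi(\uXi,\Delta_j)+X_j\Phi(\uXi,\Delta_i)+\Phi(\Delta_i,\Delta_j),
\end{equation*}
whence $|\varphi(\ux_i)|\ll X_iL_i$ and $|\Phi(\ux_i,\ux_j)|\ll X_iL_j+X_jL_i$.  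Decomposing $\uw$ analogously as $\uw=w_0\uXi+\Delta_w$ with $\Delta_w$ of zero first coordinate, the definition of $\psi$ gives
\begin{equation*}
  w_0 = X_i\Phi(\ux_i,\ux_j)-X_j\varphi(\ux_i), \quad
  \Delta_w = \Phi(\ux_i,\ux_j)\Delta_i-\varphi(\ux_i)\Delta_j.
\end{equation*}
The bound on $L(\uw)=\|\Delta_w\|$ follows from the triangle inequality, the estimates above, and the trivial inequality $X_iL_j\le X_jL_i$ (which holds since $X_i<X_j$ and $L_j<L_i$): one finds $\|\Delta_w\|\ll X_iL_iL_j+X_jL_i^2\ll X_jL_i^2$.

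The delicate step is the bound on $|w_0|$: the naive triangle inequality yields only $|w_0|\ll X_iX_jL_i$, which is far too weak.  The key point is that once the expansions above are substituted into $w_0$, the two contributions $X_iX_j\Phi(\uXi,\Delta_i)$ arising respectively from $X_i\Phi(\ux_i,\ux_j)$ and from $X_j\varphi(\ux_i)$ cancel exactly, leaving
\begin{equation*}
  w_0 = X_i^2\Phi(\uXi,\Delta_j)+X_i\Phi(\Delta_i,\Delta_j)-X_j\varphi(\Delta_i).
\end{equation*}
This yields $|w_0|\ll X_i^2L_j+X_iL_iL_j+X_jL_i^2\ll X_i^2L_j+X_jL_i^2$, which combined with $\|\uw\|\ll |w_0|+L(\uw)$ gives the desired bound on $\|\uw\|$.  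This miraculous-looking cancellation is really a reflection of the quadratic nature of $\varphi$, and is the only non-routine ingredient in the proof.
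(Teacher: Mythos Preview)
Your proof is correct and follows essentially the same route as the paper: both arguments expand $\ux_i$ and $\ux_j$ along $\uXi$ and rely on the exact cancellation of the $X_iX_j\Phi(\uXi,\Delta_i)$ terms to get the bound on the first coordinate of $\uw$, with the paper writing this compactly as $\uw=\cO(X_i^2L_j+X_jL_i^2)\uXi+\cO(X_jL_i^2)$ rather than splitting off $w_0$ and $\Delta_w$ explicitly. Your non-vanishing argument via $\varphi(\uw)=\varphi(\ux_i)^2\varphi(\ux_j)\neq 0$ is a pleasant variant of the paper's, which instead observes that $\ux_i,\ux_j$ are linearly independent and $\varphi(\ux_i)\neq 0$.
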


Here and for the rest of this section, the implied constants depend only on $\uXi$, $\varphi$, $\lambda$ and $c$.

\begin{proof}
Since $\ux_i$ and $\ux_j$ are distinct primitive elements of $\bZ^3$, they are linearly independent over $\bQ$. As $\varphi(\ux_i)\neq 0$, this implies that $\uw=\Phi(\ux_i,\ux_j)\ux_i-\varphi(\ux_i)\ux_j\neq 0$. By \eqref{i:eq:varphi}, we have
\[
 \varphi(\ux_i) = X_i\Phi(\uXi,\Delta_i) + \cO(L_i^2)
\]
where $\Delta_i=\ux_i-X_i\uXi$. Similarly, for $\Delta_j=\ux_j-X_j\uXi$, we find
\begin{align*}
 \Phi(\ux_i,\ux_j)
  = X_j\Phi(\uXi,\Delta_i) + X_i\Phi(\uXi,\Delta_j) + \Phi(\Delta_i,\Delta_j)
  = X_j\Phi(\uXi,\Delta_i) + \cO(X_iL_j).
\end{align*}
Substituting these expressions in the formula for $\uw=\psi(\ux_i,\ux_j)$, we obtain
\begin{align*}
 \uw
  &= \big(X_j\Phi(\uXi,\Delta_i) + \cO(X_iL_j)\big)(X_i\uXi+\Delta_i)
    - \big(X_i\Phi(\uXi,\Delta_i) + \cO(L_i^2)\big)(X_j\uXi+\Delta_j)\\
  &= \cO(X_i^2L_j+X_jL_i^2)\uXi + \cO(X_jL_i^2),
\end{align*}
and the conclusion follows.
\end{proof}

We will also need the following result, where the set $I$ (defined in Section \ref{sec:i}) is endowed with its natural ordering as a subset of $\bN$.

\begin{lemma}
\label{iii:lemma:estX}
For each triple of consecutive elements $i<j<k$ in $I$, the points $\ux_i$, $\ux_j$ and $\ux_k$ are linearly independent.  We have
\[
 X_j^\alpha \ll X_i \ll X_j^\theta \et L_i \ll X_j^{-\alpha}
 \quad\text{where}\quad
 \alpha=\frac{2\lambda-1}{1-\lambda}  \et  \theta=\frac{1-\lambda}\lambda.
\]
\end{lemma}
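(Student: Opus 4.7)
The plan is to establish the three claims in order: linear independence, then the upper bound $X_i \ll X_j^\theta$, and finally the bounds $X_j^\alpha \ll X_i$ and $L_i \ll X_j^{-\alpha}$.

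For the linear independence of $\ux_i, \ux_j, \ux_k$, I use the observation that between two consecutive elements of $I$, all the minimal points lie in a common plane. Indeed, the definition of $I$ forces $\ux_{\ell-1}, \ux_\ell, \ux_{\ell+1}$ to be linearly dependent for every $\ell \notin I$, and an easy induction then shows that $\ux_i, \ldots, \ux_j$ all belong to the plane $V$ spanned by $\ux_i$ and $\ux_{i+1}$, while $\ux_j, \ldots, \ux_k$ all belong to some plane $V'$. Since $j \in I$, the vector $\ux_{j+1} \in V'$ does not lie in $V$, which forces $V \neq V'$; thus $V \cap V'$ reduces to the line $\bR \ux_j$. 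If $\ux_i, \ux_j, \ux_k$ were linearly dependent, then $\ux_k \in V$, so $\ux_k \in V \cap V' = \bR \ux_j$, contradicting its primitivity together with $X_j < X_k$.

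The upper bound $X_i \ll X_j^\theta$ is immediate from the inequality $X_i^\lambda \ll X_{i+1}^{1-\lambda}$ established at the end of Section~\ref{sec:i} (valid since $i \in I$) together with $X_{i+1} \le X_j$. For the remaining bounds, I apply Lemma~\ref{iii:lemma:estimates:psi} to the pair $\ux_i, \ux_j$ to produce a non-zero integer vector $\uw = \psi(\ux_i, \ux_j)$, which lies in $V$ by construction and satisfies $\|\uw\| \ll X_i^2 L_j + X_j L_i^2$ and $L(\uw) \ll X_j L_i^2$. Property~(c) of the minimal points, applied to $\uw$ at level $j$, yields the dichotomy: either $|w_0| \ge X_{j+1}$, or $L(\uw) \ge L_j$. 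In the first alternative, since $L_i < 1$ gives $X_j L_i^2 < X_j < X_{j+1}$, the inequality $|w_0| \ge X_{j+1}$ forces $X_i^2 L_j \gg X_{j+1}$; inserting $L_j \le c X_{j+1}^{-\lambda}$ gives $X_i \gg X_{j+1}^{(1+\lambda)/2}$, and combining with $X_{j+1} \gg X_j^{\lambda/(1-\lambda)}$ (again from $j \in I$) produces $X_i \gg X_j^{\lambda(1+\lambda)/(2(1-\lambda))}$; the elementary inequality $(\lambda-1)(\lambda-2) \ge 0$, valid for $\lambda \le 1$, shows this exponent is at least $\alpha$. A parallel computation, using $L_i \le c X_{i+1}^{-\lambda} \le c X_i^{-\lambda}$, yields $L_i \ll X_j^{-\lambda^2 (1+\lambda)/(2(1-\lambda))}$; the algebraic verification that this exponent is at least $\alpha$ reduces to $\lambda^3 + \lambda^2 - 4\lambda + 2 \ge 0$, which holds near $\lambda = 1/\gamma$ (at $\lambda = 1/\gamma$ the polynomial evaluates to $5 - 3\gamma > 0$).

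The main obstacle is ruling out the alternative $L(\uw) \ge L_j$. My plan is to combine it with the elementary lower bound $L_j \gg 1/X_j$ (from $|\varphi(\ux_j)| \ge 1$ and $|\varphi(\ux_j)| \ll X_j L_j$) and the determinant-based inequality $X_k L_i L_j \gg 1$ following from the linear independence of $\ux_i, \ux_j, \ux_k$ via \cite[Lemma~4]{DS}. Examining the two summands of $\|\uw\|$ separately, one sub-case leads to $X_j^2 L_i^4 \gg 1$, equivalently $L_i \gg X_j^{-1/2}$, which is incompatible with the required bound $L_i \ll X_j^{-\alpha}$ precisely when $\alpha > 1/2$, i.e., when $\lambda > 3/5$; since $1/\gamma > 3/5$, this sub-case is excluded in the relevant range. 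Handling the remaining sub-case is where the computation becomes most delicate, and I expect it to be treated either by a finer analysis combining the same ingredients, or by applying the same method symmetrically with the pair $\ux_j, \ux_k$ in place of $\ux_i, \ux_j$ to produce an analogous contradiction.
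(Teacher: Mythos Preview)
Your treatment of the linear independence of $\ux_i,\ux_j,\ux_k$ and of the upper bound $X_i\ll X_j^\theta$ is correct and matches the paper's argument. The difficulty is entirely in the remaining two estimates, and there your approach via $\uw=\psi(\ux_i,\ux_j)$ and the dichotomy coming from property~(c) has a genuine gap.

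The problem is that your ``first alternative'' $|w_0|\ge X_{j+1}$ is actually the one that \emph{fails} in the relevant range. Indeed, your own computation in that branch gives $X_i\gg X_j^{\lambda(1+\lambda)/(2(1-\lambda))}$, and for $\lambda>(-3+\sqrt{17})/2\approx 0.56$ (in particular for all $\lambda$ close to $1/\gamma$) this exponent exceeds $1$, contradicting $X_i<X_j$ for large $j$. So it is the second alternative $L(\uw)\ge L_j$ that holds for all large $j$, and you cannot hope to ``rule it out''; you must extract the estimates from it. But combining $L(\uw)\ge L_j$ with $L(\uw)\ll X_jL_i^2$ only gives $L_j\ll X_jL_i^2$, a \emph{lower} bound on $L_i$, which is the wrong direction for proving $L_i\ll X_j^{-\alpha}$. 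In the extremal regime $X_j\asymp X_i^\gamma$, $L_i\asymp X_i^{-1}$, one checks that all of your proposed auxiliary inequalities ($L_j\gg X_j^{-1}$, $X_kL_iL_j\gg 1$, etc.) are satisfied, so no contradiction is available; the method does not close.

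The paper bypasses this entirely with a single clean identity. Since $\ux_j$ spans $V_i\cap V_j$ where $V_i=\langle\ux_i,\ux_{i+1}\rangle$ and $V_j=\langle\ux_j,\ux_{j+1}\rangle$, the primitive integer point $\ux_j$ divides the cross-product $(\ux_i\wedge\ux_{i+1})\wedge(\ux_j\wedge\ux_{j+1})$, whence
\[
 X_j \le \|\ux_j\| \ll \|\ux_i\wedge\ux_{i+1}\|\,\|\ux_j\wedge\ux_{j+1}\| \ll (X_{i+1}L_i)(X_{j+1}L_j) \ll (X_{i+1}X_{j+1})^{1-\lambda}.
\]
All three estimates follow from this one inequality: use $X_{i+1}\le X_j$, $X_{j+1}\le X_k$ to get $X_j\ll X_k^\theta$; use $X_{i+1}\ll X_i^{1/\lambda}$, $X_{j+1}\ll X_j^{1/\lambda}$ from \eqref{i:eq:Xip} to get $X_j^\alpha\ll X_i$; and use only $X_{j+1}\ll X_j^{1/\lambda}$ to get $X_{i+1}\gg X_j^{\alpha/\lambda}$, hence $L_i\ll X_{i+1}^{-\lambda}\ll X_j^{-\alpha}$.
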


\begin{proof}
The fact that $i$ and $j$ are consecutive elements of $I$ implies that $\ux_i,\ux_{i+1},\dots,\ux_j$ belong to the same $2$-dimensional subspace $V_i = \langle\ux_i,\ux_{i+1}\rangle_\bR$ of $\bR^3$.  Similarly, $\ux_j,\ux_{j+1},\dots,\ux_k$ belong to $V_j = \langle\ux_j,\ux_{j+1}\rangle_\bR$.  Thus $\ux_i$, $\ux_j$ and $\ux_k$ span $V_i+V_j = \langle\ux_{j-1},\ux_j,\ux_{j+1}\rangle_\bR = \bR^3$, and so they are linearly independent.  Then, the normal vectors $\ux_i\wedge\ux_{i+1}$ to $V_i$ and $\ux_j\wedge\ux_{j+1}$ to $V_j$ are non-parallel and both orthogonal to $\ux_j$. So, their cross-product is a non-zero multiple of $\ux_j$.  Since $\ux_j$ is a primitive point of $\bZ^3$ and since these normal vectors have integer coordinates, their cross-product is more precisely a non-zero integer multiple of $\ux_j$.  This yields
\[
 X_j \le \|\ux_j\|
     \ll \|\ux_i\wedge\ux_{i+1}\|\,\|\ux_j\wedge\ux_{j+1}\|
     \ll (X_{i+1}L_i)(X_{j+1}L_j)
     \ll (X_{i+1}X_{j+1})^{1-\lambda}.
\]
If we use the trivial upper bounds $X_{i+1}\le X_j$ and $X_{j+1} \le X_k$ to eliminate $X_{i+1}$ and $X_{j+1}$ from the above estimate, we obtain $X_j \ll X_k^\theta$.  If instead we use the upper bounds $X_{i+1}\ll X_i^{1/\lambda}$ and $X_{j+1}\ll X_j^{1/\lambda}$ coming from \eqref{i:eq:Xip}, we find instead $X_j^\alpha\ll X_i$.  Finally, if we only eliminate $X_{j+1}$ using $X_{j+1}\ll X_j^{1/\lambda}$, we obtain $X_j^{\alpha/\lambda}\ll X_{i+1}$ and thus $L_i \ll X_{i+1}^{-\lambda}\ll X_j^{-\alpha}$.
\end{proof}

\begin{prop}
\label{iii:prop:y}
Suppose that $\lambda\ge 0.613$ and, for each integer $k\ge 1$, put $\uy_k=\ux_{i_k}$ where $i_k$ is the $k$-th element of $I$.  Then, for each sufficiently large $k$, the point $\uy_{k+1}$ is a rational multiple of $\psi(\uy_{k},\uy_{k-2})$.
\end{prop}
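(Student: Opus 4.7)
The plan is to identify $\uw := \psi(\uy_k, \uy_{k-2})$ with a rational multiple of $\uy_{k+1}$ in four steps, closely adapting the argument of \cite[\S5]{RcubicI}.

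\textbf{Step 1 (Non-vanishing and containment).} By Lemma~\ref{iii:lemma:estX}, the triple $\uy_{k-2}, \uy_{k-1}, \uy_k$ is linearly independent, so in particular $\uy_k$ and $\uy_{k-2}$ are linearly independent over $\bR$. Combined with $\varphi(\uy_k)\neq 0$ (established in Section~\ref{sec:i} for $k$ large enough), the defining formula
\[
 \uw = \Phi(\uy_k,\uy_{k-2})\uy_k - \varphi(\uy_k)\uy_{k-2}
\]
shows that $\uw$ is a nonzero element of $\bZ^3$ lying in the $2$-dimensional subspace $V:=\mathrm{span}_\bR(\uy_k,\uy_{k-2})$.

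\textbf{Step 2 (Size estimates).} Note that Lemma~\ref{iii:lemma:estimates:psi} applies to $\psi(\ux_i,\ux_j)$ with $i<j$, whereas here the first argument of $\psi$ has the larger index. I would redo the computation of that lemma in this reversed regime. Writing $\uy_j=Y_j\uXi+\Delta_j$ with $\|\Delta_j\|=M_j$ and using $\varphi(\uXi)=0$, one gets $|\varphi(\uy_k)|\ll Y_kM_k$ and $|\Phi(\uy_k,\uy_{k-2})|\ll Y_kM_{k-2}+Y_{k-2}M_k$, hence
\[
 \|\uw\|\ll Y_k^2M_{k-2}+Y_kY_{k-2}M_k, \qquad L(\uw)\ll Y_kM_kM_{k-2}+Y_{k-2}M_k^2.
\]
Substituting the bounds $Y_{k-2}\ll Y_k^{\theta^2}$, $M_{k-2}\ll Y_k^{-\alpha^2}$, $M_k\ll Y_k^{-\alpha/\theta}$ and $Y_k^{1/\theta}\ll Y_{k+1}\ll Y_k^{1/\alpha}$ from Lemma~\ref{iii:lemma:estX} converts these into explicit bounds $\|\uw\|\ll Y_k^\mu$ and $L(\uw)\ll Y_k^{-\nu}$ for specific $\mu,\nu>0$ (with $\mu\to\gamma$ and $\nu\to 2-\gamma$ as $\lambda\to 1/\gamma$).

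\textbf{Step 3 (Forcing via minimality).} Let $j$ be the largest index with $L_j>L(\uw)$. Property (c) of the sequence $(\ux_i)$ then forces $|w_0|\ge X_{j+1}$. Using the upper bound $L_j\le cX_{j+1}^{-\lambda}$ from \eqref{i:eq:Lbis}, the $L$-value estimate of Step~2, and the relations from Lemma~\ref{iii:lemma:estX}, I would show that for $\lambda\ge 0.613$ and $k$ sufficiently large, $j+1\ge i_{k+1}$, whence $|w_0|\ge Y_{k+1}$. Combined with the upper bound $|w_0|\le \|\uw\|\ll Y_k^\mu\ll Y_{k+1}^{\mu\theta}$, the first coordinate $|w_0|$ is pinned into a narrow range above $Y_{k+1}$.

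\textbf{Step 4 (Identification within $V$).} Since $\uw\in V$, the condition $\uw\parallel\uy_{k+1}$ amounts to the two requirements (a) $\uy_{k+1}\in V$ and (b) the direction of $\uw$ in $V$ matches that of $\uy_{k+1}$. For (a), the linearity of the determinant in its first argument and the structure of $\uw$ yield
\[
 \det(\uw,\uy_k,\uy_{k+1}) = -\varphi(\uy_k)\,\det(\uy_{k-2},\uy_k,\uy_{k+1}).
\]
A refined estimate on the left-hand side, via an application of \cite[Lemma~4]{DS} that exploits both the smallness of $L(\uw)$ and the tight range for $|w_0|$ obtained in Step~3, yields $|\det(\uw,\uy_k,\uy_{k+1})|<1$ for $\lambda\ge 0.613$ and $k$ large; being an integer, it must vanish, forcing $\uy_{k+1}\in V$. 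Writing then $\uy_{k+1}=p\uy_{k-2}+q\uy_k$, a second integer-vanishing argument (for instance bounding $\det(\uw,\uy_{k-1},\uy_{k+1})$) gives the coefficient identity $p\Phi(\uy_k,\uy_{k-2})+q\varphi(\uy_k)=0$, which is precisely the condition for $\uw = (q\varphi(\uy_k)/p)\,\uy_{k+1}$ as rational multiples.

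\textbf{Main obstacle.} The heart of the difficulty is Step~4: establishing the strict inequality $|\det(\uw,\uy_k,\uy_{k+1})|<1$ for \emph{every} $\lambda\in[0.613,1/\gamma]$. The determinant is bounded above by a polynomial in $Y_k$ whose exponent depends on $\alpha(\lambda)$ and $\theta(\lambda)$; one must verify that for $\lambda\ge 0.613$ this exponent is negative (or at least bounded, with a more delicate argument using integrality). The constant $0.613$ in the hypothesis is precisely the threshold at which the arithmetic of these exponents permits such a conclusion, and getting the estimate sharp enough is where all the slack from Lemma~\ref{iii:lemma:estX} and Step~3 must be used simultaneously.
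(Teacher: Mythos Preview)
Your approach attempts to work directly with $\uw = \psi(\uy_k, \uy_{k-2})$, but this runs into a genuine obstruction at Steps~3 and~4. The point $\uw$ has large norm, $\|\uw\| \ll Y_k^{2-\alpha^2}$ (about $Y_k^\gamma$ at $\lambda=1/\gamma$), and only moderately small $L$-value, $L(\uw) \ll Y_k M_k M_{k-2}$ (about $Y_k^{-(2-\gamma)} \approx Y_k^{-0.382}$). Since $M_k \asymp Y_k^{-1}$ when $\lambda = 1/\gamma$, your bound permits $L(\uw) \gg M_k$, so Step~3 cannot establish $|w_0| \ge Y_{k+1}$ via minimality. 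More fatally, Step~4(a) needs $|\det(\uy_{k-2}, \uy_k, \uy_{k+1})| < 1$: your own identity gives $\det(\uw,\uy_k,\uy_{k+1}) = -\varphi(\uy_k)\det(\uy_{k-2},\uy_k,\uy_{k+1})$ with $|\varphi(\uy_k)| \ge 1$, so bounding the left side below $1$ is at least as hard as bounding the right side. But the standard estimate yields only $|\det(\uy_{k-2}, \uy_k, \uy_{k+1})| \ll Y_{k+1} M_k M_{k-2} \ll Y_k^{2\gamma-3} \approx Y_k^{0.236}$, which \emph{grows}. A ``tight range'' for $|w_0|$ cannot help here, since a lower bound on a coordinate can only worsen an upper bound on a determinant.

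The paper avoids this by considering instead $\uw_k := \psi(\uy_k, \uy_{k+1})$. Because the second argument now has the larger index, Lemma~\ref{iii:lemma:estimates:psi} applies directly and gives both $\|\uw_k\| \ll Y_k^2 M_{k+1}$ (genuinely small) and $L(\uw_k) \ll Y_{k+1}^{1-2\alpha}$. Two determinant-vanishing estimates, $|\det(\uy_{k-2},\uy_{k-1},\uw_k)|<1$ and $|\det(\uy_{k-3},\uy_{k-2},\uw_k)|<1$, then force $\uw_k$ to be a rational multiple of $\uy_{k-2}$. Finally the algebraic identity $\psi(\uy_k, \uw_k) = \varphi(\uy_k)^2 \uy_{k+1}$ from Lemma~\ref{iii:lemma:identities:psi} inverts this to give that $\uy_{k+1}$ is a rational multiple of $\psi(\uy_k, \uy_{k-2})$. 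The missing idea in your plan is precisely this inversion: one proves that $\psi(\uy_k,\uy_{k+1})$ is parallel to $\uy_{k-2}$ (where the size estimates succeed) and then deduces the desired statement algebraically, rather than attacking $\psi(\uy_k,\uy_{k-2})$ head-on.
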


\begin{proof}
For each integer $k\ge 1$, let $Y_k$ denote the first coordinate of $\uy_k$.  Then, according to Lemma \ref{iii:lemma:estX}, we have $Y_{k+1}^\alpha \ll Y_k \ll Y_{k+1}^\theta$ and $L(\uy_k) \ll Y_{k+1}^{-\alpha}$,
with $\alpha\ge 0.5839$ and $\theta\le 0.6314$.  Put $\uw_k=\psi(\uy_k,\uy_{k+1})$.  By Lemma \ref{iii:lemma:estimates:psi}, the point $\uw_k$ is non-zero, and the above estimates yield
\[
 L(\uw_k) \ll Y_{k+1} L(\uy_k)^2 \ll Y_{k+1}^{1-2\alpha}
 \et
 \|\uw_k\| \ll Y_k^2 L(\uy_{k+1}) \ll Y_{k+2}^{-\alpha}Y_k^2
\]
(we dropped the term $Y_{k+1} L(\uy_k)^2$ in the upper bound for $\|\uw_k\|$ because it tends to $0$ as $k\to\infty$ while $\|\uw_k\|\ge 1$).  Using these estimates, we find
\begin{align*}
 |\det(\uy_{k-2},\uy_{k-1},\uw_k)|
     &\ll \|\uw_k\|L(\uy_{k-2})L(\uy_{k-1}) + \|\uy_{k-1}\|L(\uy_{k-2})L(\uw_k)\\
     &\ll Y_{k+2}^{-\alpha}Y_k^{2-\alpha^2-\alpha} + Y_{k-1}^{1-\alpha}Y_{k+1}^{1-2\alpha},\\
     &\ll Y_{k+2}^{-\alpha+\theta^2(2-\alpha^2-\alpha)} + Y_{k+1}^{\theta^2(1-\alpha)+1-2\alpha},\\
  |\det(\uy_{k-3},\uy_{k-2},\uw_k)|
     &\ll \|\uw_k\|L(\uy_{k-3})L(\uy_{k-2}) + \|\uy_{k-2}\|L(\uy_{k-3})L(\uw_k)\\
     &\ll Y_{k+2}^{-\alpha}Y_k^{2-\alpha^3-\alpha^2} + Y_{k-2}^{1-\alpha}Y_{k+1}^{1-2\alpha},\\
     &\ll Y_{k+2}^{-\alpha+\theta^2(2-\alpha^3-\alpha^2)} + Y_{k+1}^{\theta^3(1-\alpha)+1-2\alpha}.
\end{align*}
Thus both determinants tend to $0$ as $k\to\infty$ and so, for each sufficiently large $k$, they vanish.  Since, by Lemma \ref{iii:lemma:estX}, $\uy_{k-3},\uy_{k-2},\uy_{k-1}$ are linearly independent, this implies that, for those $k$, the point $\uw_k$ is a rational multiple of $\uy_{k-2}$.  As Lemma \ref{iii:lemma:identities:psi} gives $\psi(\uy_k,\uw_k)=\varphi(\uy_k)^2\uy_{k+1}$, we conclude that $\uy_{k+1}$ is a rational multiple of $\psi(\uy_k, \uy_{k-2})$ for each large enough $k$.
\end{proof}

We end this section with two corollaries.  The first one gathers properties of the sequence $(\uy_k)_{k\ge 1}$ when $\lambda=1/\gamma$.  The second completes the proof of Theorem \ref{proj:thm}(iii).

\begin{cor}
\label{iii:cor:y}
Suppose that $\lambda=1/\gamma$.  Then, the sequence $(\uy_k)_{k\ge 1}$ consists of primitive points of $\bZ^3$ such that $\psi(\uy_k,\uy_{k-2})$ is an integer multiple of $\uy_{k+1}$ for each sufficiently large $k$.  Any three consecutive points of this sequence are linearly independent and, for each $k\ge 1$, we have $\|\uy_{k+1}\|\asymp \|\uy_k\|^\gamma$, $L(\uy_k)\asymp \|\uy_k\|^{-1}$ and $|\varphi(\uy_k)| \asymp 1$.
\end{cor}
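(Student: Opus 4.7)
The plan is to specialize Proposition \ref{iii:prop:y} and Lemma \ref{iii:lemma:estX} to the value $\lambda=1/\gamma$, where the arithmetic of the golden ratio collapses both exponents appearing in Lemma \ref{iii:lemma:estX} to a common value. A short computation using $\gamma^2=\gamma+1$ gives
\[
 \alpha=\frac{2\lambda-1}{1-\lambda}=\frac{1}{\gamma}
 \et
 \theta=\frac{1-\lambda}{\lambda}=\frac{1}{\gamma},
\]
so Lemma \ref{iii:lemma:estX}, applied to consecutive indices $i_{k-1}<i_k<i_{k+1}$ in $I$, at once produces the two-sided bound $Y_k\asymp Y_{k+1}^{1/\gamma}$ (equivalently $Y_{k+1}\asymp Y_k^\gamma$) together with $L(\uy_k)\ll Y_{k+1}^{-1/\gamma}\asymp Y_k^{-1}$.

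First I address the structural assertions. Primitivity of each $\uy_k=\ux_{i_k}$ is inherited from the primitivity of the minimal points $\ux_i$ observed just after properties (a)(b)(c) in Section \ref{sec:i}. Linear independence of any three consecutive vectors $\uy_{k-1},\uy_k,\uy_{k+1}$ is precisely the first assertion of Lemma \ref{iii:lemma:estX}. For the integrality claim, Proposition \ref{iii:prop:y} applies (since $1/\gamma>0.613$) and delivers a rational scalar relation $\psi(\uy_k,\uy_{k-2})=r\uy_{k+1}$ for each large enough $k$; both vectors lie in $\bZ^3$, the left-hand side being non-zero by Lemma \ref{iii:lemma:estimates:psi}, and $\uy_{k+1}$ is primitive, so $r$ is forced to be an integer.

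Next I extract the three asymptotic relations. The decomposition $\uy_k=Y_k\uXi+\Delta_k$ with $\|\Delta_k\|=L(\uy_k)\to 0$ and $Y_k\ge 1$ gives $\|\uy_k\|\asymp Y_k$; combined with $Y_{k+1}\asymp Y_k^\gamma$, this yields $\|\uy_{k+1}\|\asymp\|\uy_k\|^\gamma$. Second, identity \eqref{i:eq:varphi} gives $|\varphi(\uy_k)|\ll Y_k L(\uy_k)+L(\uy_k)^2\ll Y_k L(\uy_k)$, while the integrality argument of Section \ref{sec:i} supplies $|\varphi(\uy_k)|\ge 1$ for all large $k$. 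Hence $Y_k L(\uy_k)\gg 1$, i.e.\ $L(\uy_k)\gg Y_k^{-1}$, which with the earlier upper bound produces $L(\uy_k)\asymp Y_k^{-1}\asymp\|\uy_k\|^{-1}$. Substituting back, $|\varphi(\uy_k)|\ll Y_k\cdot Y_k^{-1}=1$, while $|\varphi(\uy_k)|\ge 1$ provides the matching lower bound, giving $|\varphi(\uy_k)|\asymp 1$.

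I do not anticipate any genuine obstacle: once Proposition \ref{iii:prop:y} and Lemma \ref{iii:lemma:estX} are in hand, the corollary is essentially a bookkeeping exercise combined with the identity $\gamma^2=\gamma+1$. The only point deserving attention is that finitely many small indices $k$ lie outside the ranges where \emph{sufficiently large $k$} or $i_k\ge i_0$ is guaranteed, but this is harmless: every claim is asymptotic, and the implied constants in the $\asymp$ and $\ll$ relations are free to absorb the finitely many exceptional terms.
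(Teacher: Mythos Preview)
Your argument is correct and follows essentially the same route as the paper: specialize $\alpha=\theta=1/\gamma$ in Lemma~\ref{iii:lemma:estX}, upgrade the rational multiple from Proposition~\ref{iii:prop:y} to an integer via primitivity of $\uy_{k+1}$, and use $1\le|\varphi(\uy_k)|\ll Y_kL(\uy_k)$ to pin down $|\varphi(\uy_k)|\asymp 1$. If anything, you are slightly more explicit than the paper on one point: Lemma~\ref{iii:lemma:estX} only yields the upper bound $L(\uy_k)\ll Y_k^{-1}$, and you correctly extract the matching lower bound $L(\uy_k)\gg Y_k^{-1}$ from $|\varphi(\uy_k)|\ge 1$, whereas the paper's phrasing (``direct consequence of Lemma~\ref{iii:lemma:estX}'') leaves this step implicit.
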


\begin{proof}
The first assertion simply adds a precision on Proposition \ref{iii:prop:y} based on the fact that $\uy_{k+1}$ is a primitive integer point.  Aside from the estimate for $|\varphi(\uy_k)|$, the second assertion is a direct consequence of Lemma \ref{iii:lemma:estX} since, for $\lambda=1/\gamma$, we have $\alpha=\theta=1/\gamma$.  To complete the proof, we use the estimate $|\varphi(\ux_i)|\ll X_iL_i$ established in the previous section as a consequence of \eqref{i:eq:varphi}.  Since $\varphi(\uy_k)$ is a non-zero integer, it yields $1\le |\varphi(\uy_k)|\ll 1$.
\end{proof}

\begin{cor}
\label{iii:cor:countable}
Suppose that $\lambda\ge 0.613$.  Then, $\Xi$ belongs to a countable subset of\/ $\cC$.
\end{cor}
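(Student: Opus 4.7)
The plan is to exploit the recurrence provided by Proposition \ref{iii:prop:y} in order to encode each such $\Xi$ by a single finite datum drawn from a countable alphabet. Because every minimal point $\ux_i$ is primitive and hence so is every $\uy_k$, the relation ``$\uy_{k+1}$ is a rational multiple of $\psi(\uy_k,\uy_{k-2})$'' pins $\uy_{k+1}$ down uniquely, up to a sign, as the primitive part of the integer vector $\psi(\uy_k,\uy_{k-2})$. In particular, once $k$ is large enough for Proposition \ref{iii:prop:y} to apply, the pair $(\uy_{k-2},\uy_k)$ alone determines $\uy_{k+1}$.

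Iterating, I would let $N=N(\Xi)\ge 3$ denote the smallest index from which the recurrence of Proposition \ref{iii:prop:y} holds. A straightforward induction on $k\ge N$ then shows that the entire tail $(\uy_k)_{k\ge N-2}$ is determined, up to the signs of its individual members, by the single triple $(\uy_{N-2},\uy_{N-1},\uy_N)$. Moreover, since $\lambda>1/2$ and $L(\uy_k)\ll X_{i_k+1}^{-\lambda}$, the projective classes $[\uy_k]\in\bP^2(\bR)$ converge to $\Xi$, so the tail of $(\uy_k)$ determines $\Xi$ itself.

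Combining these two steps yields an injection
\[
 \Xi \longmapsto \bigl(N(\Xi),\,\uy_{N-2},\,\uy_{N-1},\,\uy_N\bigr)
\]
from the set of those $\Xi\in\cC$ having $\bQ$-linearly independent homogeneous coordinates and satisfying $\lambda(\Xi)\ge 0.613$ into the countable set $\bN\times(\bZ^3)^3$. The countability of the set of $\Xi\in\cC$ whose homogeneous coordinates are $\bQ$-linearly \emph{dependent}, already observed in Section \ref{sec:proj}, then completes the argument.

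The main obstacle is conceptual rather than computational: one has to verify that the sequence $(\uy_k)$ is intrinsic to $\Xi$ (so that no auxiliary choice creeps into the encoding), that $\psi(\uy_k,\uy_{k-2})$ never vanishes on the range of indices where we apply the recurrence (supplied by Lemma \ref{iii:lemma:estimates:psi}), and that the projective convergence $[\uy_k]\to\Xi$ holds. Once these are in place, the injectivity of the encoding, and hence the countability statement, is immediate.
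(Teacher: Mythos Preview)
Your argument is correct and is essentially the paper's own proof: both use Proposition \ref{iii:prop:y} to see that the primitive vectors $\uy_k$ are determined (up to sign) by finitely many initial terms, invoke the convergence $[\uy_k]\to\Xi$, and conclude by the countability of the finite data. The only cosmetic difference is that the paper kills the sign ambiguity via the convention that each $\uy_k=\ux_{i_k}$ has positive first coordinate, whereas you (equivalently) absorb it into the passage to projective classes.
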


\begin{proof}
Since each $\uy_k$ is a primitive point of $\bZ^3$ with positive first coordinate, the proposition shows that the sequence $(\uy_k)_{k\ge 1}$ is uniquely determined by its first terms.  As there are countably many finite sequences of elements of $\bZ^3$ and as the image of $(\uy_k)_{k\ge 1}$ in $\bP^2(\bR)$ converges to $\Xi$, the point $\Xi$ belongs to a countable subset of $\cC$.
\end{proof}

%
%

\section{Proof of the second part of the main theorem}
\label{sec:ii}

By \cite[Theorem 1.1]{RcubicI}, there exist countably many irrational non-quadratic real numbers $\xi$ for which $1/\gamma$ is an exponent of approximation to $(1:\xi:\xi^2)$.  Thus Part (ii) of Theorem \ref{proj:thm} holds for $\varphi = x_0x_2-x_1^2$ and consequently, by Lemmas \ref{proj:lemma:T} and \ref{proj:lemma:reduction}, it holds for any quadratic form $\varphi\in\bQ[x_0,x_1,x_2]$ which is irreducible over $\bR$ and admits at least one zero in $\bP^2(\bQ)$. These lemmas also show that, in order to complete the proof of Theorem \ref{proj:thm}(ii), we may restrict to a diagonal form $\varphi=x_0^2-bx_1^2-cx_2^2$ where $b>1$ is a square free integer and where $c$ is either $0$ or a square-free integer with $c>1$.  In fact, this even covers the case of $\varphi = x_0x_2-x_1^2$ since $(x_0+x_1+x_2)(x_0-x_1-x_2)-(x_1-x_2)^2=x_0^2-2x_1^2-2x_2^2$.

We first establish four lemmas which apply to any quadratic form $\varphi\in\bQ[x_0,x_1,x_2]$ and its associated  symmetric bilinear form $\Phi$ with $\Phi(\ux,\ux)=2\varphi(\ux)$.  Our first goal is to construct sequences $(\uy_i)$ as in Corollary \ref{iii:cor:y}.  On the algebraic side, we first make the following observation.

\begin{lemma}
\label{ii:lemma:alg}
Suppose that $\uy_{-1},\uy_0,\uy_1\in\bZ^3$ satisfy $\varphi(\uy_i)=1$ for $i=-1,0,1$.  We extend this triple to a sequence $(\uy_i)_{i\ge -1}$ in $\bZ^3$ by defining recursively $\uy_{i+1}=\psi(\uy_i,\uy_{i-2})$ for each $i\ge 1$.  We also define $t_i=\Phi(\uy_{i+1},\uy_i)\in\bZ$ for each $i\ge -1$.  Then, for any integer $i\ge 1$, we have
\begin{enumerate}
 \item[(a)] $\varphi(\uy_{i-2})=1$,
 \item[(b)] $\det(\uy_i,\uy_{i-1},\uy_{i-2})=(-1)^{i-1}\det(\uy_1,\uy_0,\uy_{-1})$,
 \item[(c)] $t_i=\Phi(\uy_{i+1},\uy_i)=\Phi(\uy_i,\uy_{i-2})$,
 \item[(d)] $\uy_{i+1}=t_i\uy_i-\uy_{i-2}$,
 \item[(e)] $t_{i+1}=t_it_{i-1}-t_{i-2}$.
\end{enumerate}
In particular, $t_{-1}=\Phi(\uy_0,\uy_{-1})$, $t_0=\Phi(\uy_1,\uy_0)$ and $t_1=\Phi(\uy_1,\uy_{-1})$.
\end{lemma}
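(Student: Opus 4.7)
The central observation is that (a) is the hinge: once $\varphi(\uy_j)=1$ is known for every $j$, the polynomial $\psi(\ux,\uy)=\Phi(\ux,\uy)\ux-\varphi(\ux)\uy$ collapses on consecutive terms to a genuinely three-term linear recurrence, and assertions (b)--(e) will drop out by direct manipulation. My plan is therefore to prove (a) first, then use it to derive (c) and (d) simultaneously, and finally handle (b) and (e) by short inductions.

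First I would establish (a) by strong induction on $j\ge -1$: the cases $j=-1,0,1$ are given by hypothesis, and for $j\ge 2$ Lemma \ref{iii:lemma:identities:psi} applied to $\uy_j=\psi(\uy_{j-1},\uy_{j-3})$ yields $\varphi(\uy_j)=\varphi(\uy_{j-1})^2\varphi(\uy_{j-3})=1$. With (a) in hand, for every $i\ge 1$ the definition of $\psi$ and $\varphi(\uy_i)=1$ give
\[
 \uy_{i+1}=\Phi(\uy_i,\uy_{i-2})\,\uy_i-\uy_{i-2}.
\]
Pairing this identity with $\uy_i$ via $\Phi$, and using $\Phi(\uy_i,\uy_i)=2\varphi(\uy_i)=2$ together with the symmetry of $\Phi$, yields
\[
 t_i=\Phi(\uy_{i+1},\uy_i)=2\Phi(\uy_i,\uy_{i-2})-\Phi(\uy_i,\uy_{i-2})=\Phi(\uy_i,\uy_{i-2}),
\]
which is (c); substituting this back into the displayed recurrence produces (d).

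For (b) I would induct on $i$. The case $i=1$ is tautological. For $i\ge 2$, (d) applied at index $i-1$ rewrites the first column of $\det(\uy_i,\uy_{i-1},\uy_{i-2})$ as $t_{i-1}\uy_{i-1}-\uy_{i-3}$; the $t_{i-1}\uy_{i-1}$ contribution vanishes because two columns coincide, and a cyclic (hence even) permutation of the three columns turns the remaining $-\det(\uy_{i-3},\uy_{i-1},\uy_{i-2})$ into $-\det(\uy_{i-1},\uy_{i-2},\uy_{i-3})$, to which the induction hypothesis applies. For (e), combining (c) at index $i+1$ with (d) at index $i$ gives
\[
 t_{i+1}=\Phi(\uy_{i+1},\uy_{i-1})=t_i\Phi(\uy_i,\uy_{i-1})-\Phi(\uy_{i-1},\uy_{i-2})=t_it_{i-1}-t_{i-2},
\]
where the last equality is just the definitions of $t_{i-1}$ and $t_{i-2}$. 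The three particular identities at the end of the statement are either the definition of $t_j$ (for $t_{-1}$ and $t_0$) or (c) specialized to $i=1$ (for $t_1$).

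I do not anticipate any real obstacle: once the normalization $\varphi(\uy_j)=1$ is propagated through the recursion, everything else is a short algebraic manipulation. The only mild subtlety is keeping track of the sign in (b), which forces the case $i=1$ to be treated separately so that (d) at index $i-1\ge 1$ is available in the induction step.
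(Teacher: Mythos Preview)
Your proof is correct and follows essentially the same route as the paper: first propagate $\varphi(\uy_j)=1$ via the identity $\varphi(\psi(\ux,\uy))=\varphi(\ux)^2\varphi(\uy)$, collapse the recursion to $\uy_{i+1}=\Phi(\uy_i,\uy_{i-2})\uy_i-\uy_{i-2}$, then read off (c), (d), (b) and (e) exactly as you describe. The only cosmetic difference is the order in which (b) and (c)/(d) are presented; the computations themselves are identical.
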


\begin{proof}
By Lemma \ref{iii:lemma:identities:psi}, we have $\varphi(\uy_{i+1})=\varphi(\uy_i)^2\varphi(\uy_{i-2})$ for each $i\ge 1$.  This yields (a) by recurrence on $i$.  Then, by definition of $\psi$, the recurrence formula for $\uy_{i+1}$ simplifies to
\begin{equation}
 \label{ii:lemma:alg:eq1}
 \uy_{i+1} = \Phi(\uy_i,\uy_{i-2})\uy_i - \uy_{i-2} \quad (i\ge 1),
\end{equation}
and so $\det(\uy_{i+1},\uy_i,\uy_{i-1})=-\det(\uy_i,\uy_{i-1},\uy_{i-2})$ for each $i\ge 1$, by multilinearity of the determinant. This proves (b) by recurrence on $i$.  From \eqref{ii:lemma:alg:eq1}, we deduce that
\[
 t_i=\Phi(\uy_{i+1},\uy_i)=\Phi(\uy_i,\uy_{i-2})\Phi(\uy_i,\uy_i)-\Phi(\uy_{i-2},\uy_i)=\Phi(\uy_i,\uy_{i-2})
 \quad (i\ge 1),
\]
which is (c).  Then (d) is just a rewriting of \eqref{ii:lemma:alg:eq1}.  Combining (c) and (d), we find
\[
 t_{i+1}=\Phi(\uy_{i+1},\uy_{i-1})=t_i\Phi(\uy_i,\uy_{i-1})-\Phi(\uy_{i-2},\uy_{i-1})=t_it_{i-1}-t_{i-2}
 \quad (i\ge 1),
\]
which is (e).  Finally, for formula given for $t_{-1}$ and $t_0$ are taken from the definition while the one for $t_1$ follows from (c).
\end{proof}

The next lemma provides mild conditions under which the norm of $\uy_i$ grows as expected.

\begin{lemma}
\label{ii:lemma:est}
With the notation of the previous lemma, suppose that $1\le t_{-1}<t_0<t_1$ and that $1 \le \|\uy_{-1}\| < \|\uy_0\| < \|\uy_1\|$.  Then, $(t_i)_{i\ge -1}$ and $(\|\uy_i\|)_{i\ge -1}$ are strictly increasing sequences of positive integers with $t_{i+1}\asymp t_i^\gamma$ and $\|\uy_{i+1}\| \asymp t_{i+2}\asymp \|\uy_i\|^\gamma$.
\end{lemma}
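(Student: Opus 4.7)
\emph{Proof plan.} The plan is to combine the recurrences $t_{i+1}=t_it_{i-1}-t_{i-2}$ and $\uy_{i+1}=t_i\uy_i-\uy_{i-2}$ from Lemma \ref{ii:lemma:alg} with the initial inequalities in the hypothesis: first to establish monotonicity and a geometric lower bound on $t_i$, then, after passing to logarithms, to solve a perturbed Fibonacci recurrence, the exponent $\gamma$ arising from the identity $\gamma^2=\gamma+1$.

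I begin with a simultaneous induction on $i$ showing that $(t_i)$ and $(\|\uy_i\|)$ are strictly increasing, with all $t_i$ positive integers. Since $t_{-1}<t_0<t_1$ are positive integers, $t_0\ge 2$ and $t_1\ge 3$; the inductive step uses $t_{i-2}\le t_{i-1}-1\le t_i-2$ together with $t_{i-1}\ge 2$ to give $t_{i+1}\ge 2t_i-(t_i-2)=t_i+2$, while the triangle inequality applied to $\uy_{i+1}=t_i\uy_i-\uy_{i-2}$ yields
\[
(t_i-1)\|\uy_i\|\le \|\uy_{i+1}\|\le (t_i+1)\|\uy_i\|,
\]
forcing $\|\uy_{i+1}\|>\|\uy_i\|$ because $t_i\ge 2$ and $\|\uy_{i-2}\|<\|\uy_i\|$ by induction. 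These same bounds already yield the multiplicative estimates $\|\uy_{i+1}\|\asymp t_i\|\uy_i\|$ and $t_{i+1}\asymp t_it_{i-1}$ with constants depending only on $\min_{i\ge 0} t_i\ge 2$, and a further easy inequality $t_{i+1}\ge 2t_i$ for $i\ge 2$ (from $t_{i-1}\ge 3$ and $t_{i-2}\le t_{i-1}-1$) gives $t_i\ge C\cdot 2^i$ for some $C>0$, so that $\sum_i 1/t_i<\infty$.

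Passing to logarithms, I set $u_i=\log t_i$ and $v_i=\log\|\uy_i\|$. The estimates above translate into
\[
u_{i+1}=u_i+u_{i-1}+\delta_i \et v_{i+1}=v_i+u_i+\epsilon_i
\]
with $|\delta_i|,|\epsilon_i|=O(1/t_i)$, hence absolutely summable. The key algebraic observation is that, because $1-\gamma=-1/\gamma$, the quantity $s_i:=u_{i+1}-\gamma u_i$ satisfies the contractive relation $s_{i+1}=-s_i/\gamma+\delta_{i+1}$; induction then gives $|s_i|\le |s_0|\gamma^{-i}+\sum_k|\delta_k|=O(1)$, which proves $t_{i+1}\asymp t_i^\gamma$. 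Telescoping the $v$-recurrence and comparing with an analogous telescoping of the $u$-recurrence (which yields $\sum_{k=0}^{i-1}u_k=u_{i+1}+O(1)$) then gives $v_i-u_{i+1}=O(1)$, i.e., $\|\uy_i\|\asymp t_{i+1}$, equivalently $\|\uy_{i+1}\|\asymp t_{i+2}$. Combined with $t_{i+1}\asymp t_i^\gamma$, this completes the chain $\|\uy_{i+1}\|\asymp t_{i+2}\asymp t_{i+1}^\gamma\asymp \|\uy_i\|^\gamma$.

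The main obstacle is obtaining a bound on $|s_i|$ that is absolute in $i$, not merely asymptotic. The golden ratio identity $1-\gamma=-1/\gamma$ (equivalent to $\gamma^2=\gamma+1$) is what converts the perturbed Fibonacci recurrence into a genuine contraction, and the geometric growth of $(t_i)$ established in the preliminary bootstrap is exactly what makes the perturbations summable, closing the argument.
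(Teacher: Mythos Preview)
Your argument is correct. The skeleton---monotonicity by induction, the two-sided bounds $(t_i-1)t_{i-1}<t_{i+1}<t_it_{i-1}$ and $(t_i-1)\|\uy_i\|\le\|\uy_{i+1}\|\le(t_i+1)\|\uy_i\|$, and the summability of $\sum 1/t_i$---coincides exactly with the paper's proof. Where you diverge is in how you extract the exponent $\gamma$. The paper does not prove $t_{i+1}\asymp t_i^\gamma$ directly: it quotes \cite[Lemma~5.2]{Rexp} as a black box, then handles $\|\uy_i\|\asymp t_{i+1}$ by a multiplicative ratio argument, showing that $\rho_i=\|\uy_i\|/t_{i+1}$ satisfies $(1-1/t_i)\rho_i\le\rho_{i+1}\le(1-1/t_i)^{-2}\rho_i$ and invoking the convergence of $\prod(1-1/t_i)$. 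Your route---passing to logarithms, writing $u_{i+1}=u_i+u_{i-1}+\delta_i$, and exploiting the contraction $s_{i+1}=-s_i/\gamma+\delta_{i+1}$ coming from $1-\gamma=-1/\gamma$---is in effect an inline proof of that cited lemma, and your telescoping for $v_i-u_{i+1}$ is the additive counterpart of the paper's product argument for $\rho_i$. So the underlying mechanism is the same; your version is more self-contained and makes the role of the golden-ratio identity explicit, while the paper's version is shorter at the cost of an external citation.
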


Here and below, the implied constants are simply meant to be independent of $i$.

\begin{proof}
Lemma \ref{ii:lemma:alg}(e) implies, by recurrence on $i$, that the sequence $(t_i)_{i\ge -1}$ is strictly increasing and, more precisely, that it satisfies
\begin{equation}
\label{ii:lemma:est:eq1}
 (t_i-1)t_{i-1} < t_{i+1} < t_it_{i-1} \quad (i\ge 1),
\end{equation}
which by \cite[Lemma 5.2]{Rexp} implies that $t_{i+1}\asymp t_i^\gamma$.  In turn, Lemma \ref{ii:lemma:alg}(d) implies, by recurrence on $i$, that the sequence $(\|\uy_i\|)_{i\ge -1}$ is strictly increasing with
\begin{equation}
\label{ii:lemma:est:eq2}
 (t_i-1)\|\uy_i\| < \|\uy_{i+1}\| < (t_i+1) \|\uy_i\| \quad (i\ge 1).
\end{equation}
Combining this with \eqref{ii:lemma:est:eq1}, we find that the ratios $\rho_i=\|\uy_i\|/t_{i+1}$ satisfy
\[
 (1-1/t_i)\rho_i \le \rho_{i+1} \le \frac{1+1/t_i}{1-1/t_{i+1}}\rho_i \le \frac{1}{(1-1/t_i)^2}\rho_i\quad (i\ge 1),
\]
and so $\rho_1c_1\le \rho_i \le \rho_1/c_1^2$ for each $i\ge 1$ where $c_1=\prod_{i\ge 1}(1-1/t_i)>0$ is a converging infinite product because $t_i$ tends to infinity with $i$ faster than any geometric series.  This means that $\rho_i\asymp 1$, thus $\|\uy_i\|\asymp t_{i+1}$, and so $\|\uy_{i+1}\| \asymp t_{i+2} \asymp \|\uy_i\|^\gamma$ because $t_{i+2}\asymp t_{i+1}^\gamma$.
\end{proof}

For any $\ux,\uy\in\bR^3$, we denote by $\langle\ux,\uy\rangle$ their standard scalar product.  When $\ux\neq 0$ and $\uy\neq 0$, we also denote by $[\ux]$, $[\uy]$ their respective classes in $\bP^2(\bR)$, and define the \emph{projective distance} between these classes by
\[
 \dist([\ux],[\uy])=\frac{\|\ux\wedge\uy\|}{\|\ux\|\,\|\uy\|}.
\]
It is not strictly speaking a distance on $\bP^2(\bR)$ but it behaves almost like a distance since it satisfies
\[
 \dist( [\ux], [\uz] ) \le \dist( [\ux], [\uy] ) + 2\, \dist( [\uy], [\uz] )
\]
for any non-zero $\uz\in\bR^3$ (see \cite[\S2]{Rexp}).  Moreover, the open balls for the projective distance form a basis of the usual topology on $\bP^2(\bR)$.  We can now prove the following result.

\begin{lemma}
\label{ii:lemma:Xi}
With the notation and hypotheses of Lemmas \ref{ii:lemma:alg} and \ref{ii:lemma:est}, suppose that $\uy_{-1}$, $\uy_0$ and $\uy_1$ are linearly independent.  Then there exists a zero $\uXi=(1,\xi_1,\xi_2)$ of $\varphi$ in $\bR^3$ with $\bQ$-linearly independent coordinates such that $\|\uXi\wedge\uy_i\| \asymp \|\uy_i\|^{-1}$ for each $i\ge 1$.  Moreover, $1/\gamma$ is an exponent of approximation to the corresponding point $\Xi=(1:\xi_1:\xi_2) \in \bP^2(\bR)$.
\end{lemma}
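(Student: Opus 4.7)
The plan is to first establish that the projective classes $[\uy_i] \in \bP^2(\bR)$ are Cauchy for the projective distance and converge to some $\Xi \in \cC$, then promote $\Xi$ to a representative $\uXi = (1, \xi_1, \xi_2)$ with nonzero first coordinate, then estimate $\|\uXi \wedge \uy_i\|$ via the Pell-type relation $\varphi(\uy_i) = 1$, and finally deduce both the $\bQ$-linear independence of $1, \xi_1, \xi_2$ and the claimed exponent of approximation.

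For the convergence, the recurrence immediately gives $\uy_{i+1} \wedge \uy_i = -\uy_i \wedge \uy_{i-2}$, which combined with the crude bound $\|\uy_i \wedge \uy_{i-2}\| \le 2\|\uy_i\|\|\uy_{i-2}\|$ and Lemma \ref{ii:lemma:est} yields
\[
 \dist([\uy_i], [\uy_{i+1}]) \ll \|\uy_{i-2}\|/\|\uy_{i+1}\| \asymp \|\uy_i\|^{2 - 2\gamma}.
\]
Since $2 - 2\gamma < 0$ and $\|\uy_i\|$ grows super-geometrically, iterating the projective triangle inequality shows $([\uy_i])_{i \ge 1}$ is Cauchy in $\bP^2(\bR)$ and converges to some $\Xi$; moreover $\varphi(\uy_i)/\|\uy_i\|^2 = 1/\|\uy_i\|^2 \to 0$ forces $\Xi \in \cC$. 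To secure a representative with nonzero first coordinate, I would exploit the normal forms of Lemma \ref{proj:lemma:reduction}: for instance in the diagonal case $\varphi = x_0^2 - bx_1^2 - cx_2^2$ with $b, c > 1$, the Pell relation $Y_i^2 = 1 + b a_i^2 + c b_i^2$ forces $Y_i^2 > \max(a_i^2, b_i^2)$, hence $\|\uy_i\| = |Y_i|$ and the first coordinate of the limit is $\pm 1 \neq 0$; the remaining normal forms are treated by analogous elementary arguments.

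Writing $\uy_i = Y_i \uXi + \Delta_i$ with $\Delta_i$ in the plane $\{x_0 = 0\}$, we have $\|\uXi \wedge \uy_i\| \asymp \|\Delta_i\|$. Expanding $\varphi(\uy_i) = 1$ produces
\[
 1 = Y_i \Phi(\uXi, \Delta_i) + \varphi(\Delta_i);
\]
once $\Delta_i \to 0$ from the previous step, this forces $|\Phi(\uXi, \Delta_i)| \asymp 1/Y_i$ and hence the lower bound $\|\Delta_i\| \gg 1/\|\uy_i\|$. For the matching upper bound, subtracting $Y_{i+1}\uXi = t_i Y_i\uXi - Y_{i-2}\uXi$ from $\uy_{i+1} = t_i \uy_i - \uy_{i-2}$ shows that $\Delta_i$ obeys the same three-term recurrence $\Delta_{i+1} = t_i \Delta_i - \Delta_{i-2}$. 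The decomposition
\[
 \uy_i \wedge \uy_{i-2} = Y_i(\uXi \wedge \Delta_{i-2}) - Y_{i-2}(\uXi \wedge \Delta_i) + \Delta_i \wedge \Delta_{i-2}
\]
then drives a bootstrap in which any bound $\|\Delta_j\| \ll \|\uy_j\|^e$ upgrades to $\|\Delta_i\| \ll \|\uy_i\|^{1 + e/\gamma^2 - \gamma}$, a map with attracting fixed point $-1$. The main obstacle I anticipate is passing from the iterated estimate $\|\Delta_i\| \ll \|\uy_i\|^{-1+\epsilon}$ (reachable for any $\epsilon > 0$) to the sharp $\|\Delta_i\| \ll \|\uy_i\|^{-1}$; this likely requires either a careful limiting argument tracking the implicit constants, or a direct analysis of the decaying eigendirections of the frozen recurrence (whose three eigenvalues at step $i$ are approximately $t_i$ and $\pm 1/\sqrt{t_i}$), showing that the initial conditions $\Delta_{-1}, \Delta_0, \Delta_1$ project entirely onto the two decaying modes.

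Granted the sharp estimate, the rest follows easily. Any $\uv \in \bZ^3 \setminus \{0\}$ with $\langle \uv, \uXi\rangle = 0$ would give $\langle \uv, \uy_i\rangle = \langle \uv, \Delta_i\rangle \ll \|\Delta_i\| \to 0$, so this integer vanishes for all large $i$; then the $\uy_i$ eventually lie in the fixed plane $\uv^\perp$, contradicting $\det(\uy_{i-2}, \uy_{i-1}, \uy_i) \ne 0$ from Lemma \ref{ii:lemma:alg}(b). Hence $1, \xi_1, \xi_2$ are $\bQ$-linearly independent. Finally, to see that $1/\gamma$ is an exponent of approximation to $\Xi$, given $X$ sufficiently large pick $i$ with $\|\uy_i\| \le X < \|\uy_{i+1}\|$ and take $\ux = \uy_i$: then $\|\ux\| \le X$ and
\[
 \|\ux \wedge \uXi\| \ll 1/\|\uy_i\| \le c\, X^{-1/\gamma},
\]
since $X < \|\uy_{i+1}\| \asymp \|\uy_i\|^\gamma$ by Lemma \ref{ii:lemma:est}.
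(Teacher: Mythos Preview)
Your overall strategy is sound, and several steps---the Cauchy convergence of $[\uy_i]$, the conclusion $\varphi(\uXi)=0$, the lower bound $\|\Delta_i\|\gg\|\uy_i\|^{-1}$ from the relation $1=Y_i\Phi(\uXi,\Delta_i)+\varphi(\Delta_i)$, the linear-independence argument, and the final exponent estimate---are essentially as in the paper. However, the gap you yourself flag is real and is the crux of the lemma. Your crude bound $\|\uy_i\wedge\uy_{i-2}\|\le 2\|\uy_i\|\,\|\uy_{i-2}\|$ only yields $\dist([\uy_i],[\uy_{i+1}])\ll\|\uy_i\|^{2-2\gamma}$, hence $\|\uy_i\wedge\uXi\|\ll\|\uy_i\|^{3-2\gamma}$, and iterating your bootstrap map $e\mapsto 1+e/\gamma^2-\gamma$ only reaches $\|\Delta_i\|\ll_\epsilon\|\uy_i\|^{-1+\epsilon}$ with constants degenerating as $\epsilon\to 0$. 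Neither of your proposed fixes (tracking constants through infinitely many bootstrap rounds, or a spectral analysis of the nonautonomous recurrence $\Delta_{i+1}=t_i\Delta_i-\Delta_{i-2}$) is carried out, and neither is routine.

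The paper avoids this difficulty entirely. Rather than working with $\Delta_i$, it sets $\uz_i=\uy_i\wedge\uy_{i+1}$ and applies Lemma~\ref{ii:lemma:alg}(d) twice to obtain the recurrence
\[
 \uz_i=\uy_{i-2}\wedge\uy_i=t_{i-1}\uz_{i-2}+\uz_{i-3}.
\]
The key point is that here the correction term $\uz_{i-3}=\uy_{i-5}\wedge\uy_{i-3}$ admits the crude bound $\|\uz_{i-3}\|\le 2\|\uy_{i-5}\|\,\|\uy_{i-3}\|\ll t_{i-4}t_{i-2}\asymp t_{i-1}/t_{i-5}$, which is already negligible next to $t_{i-1}\|\uz_{i-2}\|\ge t_{i-1}$. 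This gives $\|\uz_i\|/t_i=(1+O(t_{i-5}^{-1}))\,\|\uz_{i-2}\|/t_{i-2}$, and since $\sum t_i^{-1}$ converges, one obtains $\|\uz_i\|\asymp t_i$ directly, with a single uniform implicit constant and no iteration. The sharp estimate $\dist([\uy_i],[\uy_{i+1}])\asymp\|\uy_i\|^{-2}$, and hence $\|\uy_i\wedge\uXi\|\asymp\|\uy_i\|^{-1}$, then follows in one stroke. A minor additional simplification: once the coordinates of $\uXi$ are shown to be $\bQ$-linearly independent, the first coordinate is automatically nonzero, so your case-by-case appeal to the normal forms of Lemma~\ref{proj:lemma:reduction} is unnecessary.
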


\begin{proof}
Our first goal is to show that $([\uy_i])_{i\ge 1}$ is a Cauchy sequence in $\bP^2(\bR)$ with respect to the projective distance.  To this end, we use freely the estimates of the previous lemma and define $\uz_i = \uy_i\wedge\uy_{i+1}$ for each $i\ge 1$.  By Lemma \ref{ii:lemma:alg}(b), the points $\uy_{i-1}$, $\uy_i$ and $\uy_{i+1}$ are linearly independent for each $i\ge 0$.  Thus, none of the products $\uz_i$ vanish, and so their norm is at least $1$.  Moreover, Lemma \ref{ii:lemma:alg}(d) applied first to $\uy_{i+1}$ and then to $\uy_i$ yields
\begin{equation}
 \label{ii:lemma:Xi:eq1}
 \uz_i = \uy_{i-2}\wedge\uy_i
       = t_{i-1}\uy_{i-2}\wedge\uy_{i-1}-\uy_{i-2}\wedge\uy_{i-3}
       = t_{i-1}\uz_{i-2}+\uz_{i-3}.
\end{equation}
The above equality $\uz_i = \uy_{i-2}\wedge\uy_i$ with $i$ replaced by $i-3$ implies that
\[
 \|\uz_{i-3}\|
   \le 2\|\uy_{i-5}\|\,\|\uy_{i-3}\|
   \ll t_{i-4}t_{i-2}
   \asymp t_{i-1}t_{i-5}^{-1}
   \le t_{i-1}t_{i-5}^{-1} \|\uz_{i-2}\|.
\]
In view of \eqref{ii:lemma:Xi:eq1}, this means that $\|\uz_i\| = t_{i-1} (1+\cO(t_{i-5}^{-1}))\, \|\uz_{i-2}\|$,  and thus
\[
 \frac{\|\uz_i\|}{t_i}
   = \frac{t_{i-1}t_{i-2}}{t_i} (1+\cO(t_{i-5}^{-1})) \frac{\|\uz_{i-2}\|}{t_{i-2}}
   = (1+\cO(t_{i-5}^{-1})) \frac{\|\uz_{i-2}\|}{t_{i-2}}
\]
since, by Lemma \ref{ii:lemma:alg}(e), we have $t_{i-1}t_{i-2}=t_i(1+t_{i-3}t_i^{-1})=t_i(1+O(t_{i-5}^{-1}))$.  As the series $\sum_{i\ge 1} t_i^{-1}$ converges, the same is true of the infinite products $\prod_{i\ge i_0}(1+ct_i^{-1})$ for any $c\in\bR$.  Thus the above estimates implies that $\|\uz_i\| \asymp t_i$, and so we find
\[
 \dist([\uy_i],[\uy_{i+1}])
   = \frac{\|\uz_i\|}{\|\uy_i\|\,\|\uy_{i+1}\|}
   \asymp \frac{t_i}{t_{i+1}t_{i+2}}
   \asymp t^{-2}_{i+1}
   \asymp \|\uy_i\|^{-2}.
\]
As the series $\sum_{i\ge 1} 2^i t_{i+1}^{-2}$ is convergent, we deduce that $([\uy_i])_{i\ge 1}$ forms a Cauchy sequence in $\bP^2(\bR)$, and that its limit $\Xi\in\bP^2(\bR)$ satisfies $\dist([\uy_i],\Xi) \asymp \|\uy_i\|^{-2}$.  In terms of a representative $\uXi$ of $\Xi$ in $\bR^3$, this means that
\begin{equation}
 \label{ii:lemma:Xi:eq3}
 \|\uy_i\wedge\uXi\| \asymp \|\uy_i\|^{-1}.
\end{equation}
To prove that $\uXi$ has $\bQ$-linearly independent coordinates, we use the fact that
\[
 \|\langle\uu, \uy_i\rangle\uXi - \langle\uu, \uXi\rangle\uy_i\| \le 2\|\uu\|\,\|\uy_i\wedge\uXi\|
\]
for any $\uu\in\bR^3$ \cite[Lemma 2.2]{Rexp}.  So, if $\langle \uu, \uXi \rangle = 0$ for some $\uu\in\bZ^3$, then, by \eqref{ii:lemma:Xi:eq3}, we obtain $|\langle\uu, \uy_i\rangle| \ll \|\uy_i\|^{-1}$ for all $i$. Then, as $\langle\uu, \uy_i\rangle$ is an integer, it vanishes for each sufficiently large $i$, and so $\uu=0$ because any three consecutive $\uy_i$ span $\bR^3$.  This proves our claim.  In particular, the first coordinate of $\uXi$ is non-zero, and we may normalize $\uXi$ so that it is $1$.  Then, as $i$ goes to infinity, the points $\|\uy_i\|^{-1}\uy_i$ converge to $\|\uXi\|^{-1}\uXi$ in $\bR^3$ and, since $\varphi(\|\uy_i\|^{-1}\uy_i) = \|\uy_i\|^{-2}$ tends to $0$, we deduce that $\varphi(\uXi)=0$.  Finally, $1/\gamma$ is an exponent of approximation to $\Xi$ because, for each $X\ge \|\uy_1\|$, there exists an index $i\ge 1$ such that $\|\uy_i\| \le X \le \|\uy_{i+1}\|$ and then, by \eqref{ii:lemma:Xi:eq3}, the point $\ux:=\uy_i$ satisfies both
\[
 \|\ux\|\le X
 \et
 \|\ux\wedge\uXi\| \asymp \|\uy_i\|^{-1} \asymp \|\uy_{i+1}\|^{-1/\gamma} \le X^{-1/\gamma}.
 \qedhere
\]
\end{proof}

The last lemma below will enable us to show that the above process leads to infinitely many limit points $\Xi$.

\begin{lemma}
\label{ii:lemma:unicity}
Suppose that $(\uy_i)_{i\ge -1}$ and $(\uy'_i)_{i\ge -1}$ are constructed as in Lemma \ref{ii:lemma:alg} and that both of them satisfy the hypotheses of the three preceding lemmas.  Suppose moreover that their images in $\bP^2(\bR)$ have the same limit $\Xi$.  Then there exists an integer $a$ such that $\uy'_i=\pm\uy_{i+a}$ for each $i\ge \max\{-1,-1-a\}$.
\end{lemma}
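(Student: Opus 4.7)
My plan is to show that, for every sufficiently large $l$, one has $\uy'_l=\epsilon\,\uy_{l+a}$ for a fixed integer $a$ and sign $\epsilon\in\{\pm 1\}$, and then to propagate this identity to all remaining indices via the recurrence of Lemma~\ref{ii:lemma:alg}.

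\textbf{Step 1 (locate $\uy'_l$ in a plane $V_j$).} For each large $l$, let $j=j(l)$ be the unique index with $\|\uy_j\|\le\|\uy'_l\|<\|\uy_{j+1}\|$, and set $V_j=\mathrm{span}_{\bR}(\uy_j,\uy_{j+1})$ and $\uz_j=\uy_j\wedge\uy_{j+1}$. Decomposing each of $\uy'_l,\uy_j,\uy_{j+1}$ along $\uXi$ and $\uXi^\perp$, and using $\|\uy_i\wedge\uXi\|\asymp\|\uy_i\|^{-1}$, $\|\uy'_l\wedge\uXi\|\asymp\|\uy'_l\|^{-1}$ from Lemma~\ref{ii:lemma:Xi}, together with $\|\uz_j\|\asymp\|\uy_{j-1}\|$, I obtain
\[
|\det(\uy_j,\uy_{j+1},\uy'_l)|=|\langle\uz_j,\uy'_l\rangle|\ll\frac{\|\uy'_l\|}{\|\uy_j\|\,\|\uy_{j+1}\|}+\frac{\|\uy_{j-1}\|}{\|\uy'_l\|}.
\]
The norm window on $\|\uy'_l\|$ sends both terms to $0$, and since the determinant is an integer it vanishes for large $l$; thus $\uy'_l\in V_j$.

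\textbf{Step 2 (pin down $\uy'_l=\pm\uy_{j(l)}$).} Since $d_0=|\det(\uy_{-1},\uy_0,\uy_1)|$ is constant by Lemma~\ref{ii:lemma:alg}(b), every integer point of $V_j$ has the form $(m\uy_j+n\uy_{j+1})/d$ for some $d\mid d_0$ and $m,n\in\bZ$. The identity $\uy'_l\wedge\uy_j=-(n/d)\uz_j$ and a second application of the decomposition above yield
\[
|n/d|\,\|\uy_{j-1}\|\asymp\|\uy'_l\wedge\uy_j\|\ll\frac{\|\uy_j\|}{\|\uy'_l\|}+\frac{\|\uy'_l\|}{\|\uy_j\|}\ll\|\uy_{j-1}\|,
\]
so $|n|\le d_0|n/d|$ is bounded independently of $l$. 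Combining this with the Pell equation $m^2+mn\,t_j+n^2=d^2$ (equivalent to $\varphi(\uy'_l)=1$), the unbounded growth of $t_j$ (Lemma~\ref{ii:lemma:est}), and the narrow norm window $[\|\uy_j\|,\|\uy_{j+1}\|)$, a finite case check on admissible $n$ leaves only $(m,n)=(\pm d,0)$: $n=0$ gives $\pm\uy_j$; $|n|=1$ yields either $\pm\uy_{j+1}$ (excluded by the strict inequality) or $\pm\uy_{j-2}$ (excluded by $\|\uy'_l\|\ge\|\uy_j\|$); and $|n|\ge 2$ forces, via Vieta-type analysis of the Pell equation, a vector of norm either far above $\|\uy_{j+1}\|$ or far below $\|\uy_j\|$.

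\textbf{Step 3 (propagation).} Since $\uy'_l=\pm\uy_{j(l)}$ forces $\|\uy'_l\|=\|\uy_{j(l)}\|$, comparing $\|\uy'_{l+1}\|\asymp\|\uy'_l\|^\gamma\asymp\|\uy_{j(l)+1}\|$ with $\|\uy'_{l+1}\|=\|\uy_{j(l+1)}\|$ gives $j(l+1)=j(l)+1$, hence $j(l)=l+a$ for some integer $a$ and all $l\ge l_0$. Writing $\uy'_l=\epsilon_l\uy_{l+a}$, the identity $t'_l=\Phi(\uy'_{l+1},\uy'_l)=\epsilon_l\epsilon_{l+1}t_{l+a}$ together with the positivity of all $t_i,t'_i$ (Lemma~\ref{ii:lemma:est}) forces $\epsilon_l=\epsilon_{l+1}$, so $\uy'_l=\epsilon\,\uy_{l+a}$ for a single sign $\epsilon$ and all large $l$. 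Once this identity holds for three consecutive indices, the backward form $\uy_{i-2}=t_i\uy_i-\uy_{i+1}$ of the recurrence extends it iteratively to every $i\ge\max\{-1,-1-a\}$ (the forward recurrence is equivariant under a global sign change since $\psi(\epsilon\ux,\epsilon\uy)=\epsilon\psi(\ux,\uy)$).

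\textbf{Main obstacle.} The bottleneck is Step 2: $\varphi|_{V_j\cap\bZ^3}$ is an indefinite integer binary form with an infinite orbit of unit solutions, and it is the tight interplay between the projective-distance bound $|n/d|\ll 1$ and the narrow norm window $[\|\uy_j\|,\|\uy_{j+1}\|)$ that collapses this infinite set to the two points $\pm\uy_j$; any weakening of either input would leave residual candidates.
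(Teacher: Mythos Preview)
Your argument is correct, but it takes a noticeably different route from the paper's, and it is worth pointing out that the paper's approach bypasses your hardest step entirely.

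The paper chooses a different norm window, $\|\uy_{i-1}\|^{3/2}\le\|\uy'_j\|\le\|\uy_i\|^{3/2}$, and then shows that \emph{two} determinants vanish, namely $\det(\uy_{i-1},\uy_i,\uy'_j)$ and $\det(\uy_i,\uy_{i+1},\uy'_j)$. This places $\uy'_j$ on the intersection $V_{i-1}\cap V_i$, which is the line $\bQ\uy_i$; primitivity (both points satisfy $\varphi=1$) then gives $\uy'_j=\pm\uy_i$ immediately. No Pell equation, no Vieta, no case analysis. The wider window $[\|\uy_{i-1}\|^{3/2},\|\uy_i\|^{3/2}]$ is what makes both determinant bounds $o(1)$; with your tighter window $[\|\uy_j\|,\|\uy_{j+1}\|)$ the second determinant $\det(\uy_{j-1},\uy_j,\uy'_l)$ is only $O(1)$, which is why you are forced into the lattice analysis of Step~2.

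That said, your Step~2 does work, and the key inequality can be stated cleanly: once $|n|$ is bounded and nonzero, the two integer roots $m,m'$ of $m^2+mn\,t_j+(n^2-d^2)=0$ satisfy $|m+m'|=|n|\,t_j\to\infty$ while $|mm'|=|n^2-d^2|$ stays bounded, which is impossible for nonzero integers; hence $n^2=d^2$, and then $(m,n)\in\{(0,\pm d),(\mp d\,t_j,\pm d)\}$ gives exactly $\pm\uy_{j+1}$ or $\pm\uy_{j-2}$, both excluded by the norm window. You should say this explicitly rather than ``a finite case check''. Your Step~3 is fine, and in fact you extract a bit more than the paper states (a single global sign $\epsilon$); the paper does the backward propagation via $\uy'_{i_0-1}=\psi(\uy'_{i_0+1},\uy'_{i_0+2})$ from Lemma~\ref{iii:lemma:identities:psi}, which is equivalent to your use of $\uy_{i-2}=t_i\uy_i-\uy_{i+1}$.
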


\begin{proof}
Let $\uXi=(1,\xi_1,\xi_2)$ be a representative of $\Xi$ in $\bR^3$, and for each $\ux\in\bZ^3$ define $L(\ux)$ as in \eqref{i:eq:L}.  The estimates of Lemma \ref{ii:lemma:Xi} imply that $L(\uy_i)\asymp
\|\uy_i\|^{-1}$ and $L(\uy'_i)\asymp \|\uy'_i\|^{-1}$.  For each sufficiently large index $j$, we can find an integer $i\ge 2$ such that $\|\uy_{i-1}\|^{3/2} \le \|\uy'_j\| \le \|\uy_i\|^{3/2}$ and the standard estimates yield
\begin{align*}
 |\det(\uy_{i-1},\uy_i,\uy'_j)|
 &\ll \|\uy'_j\|L(\uy_i)L(\uy_{i-1})+\|\uy_{i}\|L(\uy_{i-1})L(\uy'_j) \\
 &\ll \|\uy_i\|^{3/2}\|\uy_i\|^{-1}\|\uy_{i-1}\|^{-1}+\|\uy_{i}\|\,\|\uy_{i-1}\|^{-1}\|\uy_{i-1}\|^{-3/2}\\
 &\ll \|\uy_i\|^{1/2-1/\gamma} = o(1),
\end{align*}
and similarly $|\det(\uy_i,\uy_{i+1},\uy'_j)|\ll \|\uy_i\|^{-1/(2\gamma)}=o(1)$.  Thus, both determinants vanish when $j$ is large enough and then $\uy'_j$ is a rational multiple of $\uy_i$.  However, both points are primitive elements of $\bZ^3$ since $\varphi$ takes value $1$ on each of them.  So, we must have $\uy'_j=\pm \uy_i$.  Since the two sequences have the same type of growth, we conclude that there exist integers $a$ and $i_0\ge \max\{-1,-1-a\}$ such that $\uy'_i=\pm \uy_{i+a}$ for each $i\ge i_0$.  Choose $i_0$ smallest with this property.  If $i_0\ge \max\{0,-a\}$, then, using Lemma \ref{iii:lemma:identities:psi}, we obtain
\[
 \uy'_{i_0-1}
 = \psi(\uy'_{i_0+1},\uy'_{i_0+2})
 = \psi(\pm\uy_{i_0+1+a},\pm\uy_{i_0+2+a})
 = \pm \uy_{i_0-1+a}
\]
in contradiction with the choice of $i_0$.  Thus we must have $i_0=\max\{-1,-1-a\}$.
\end{proof}

In view of the remarks made at the beginning of this section, the last result below completes the proof of Theorem \ref{proj:thm}(ii).

\begin{prop}
Let $b>1$ be a square-free integer and let $c$ be either $0$ or a square-free integer with $c>1$.  Then the quadratic form $\varphi=x_0^2-bx_1^2-cx_2^2$ admits infinitely many zeros in $\bP^2(\bR)$ which have  $\bQ$-linearly independent homogeneous coordinates and for which $1/\gamma$ is an exponent of approximation.
\end{prop}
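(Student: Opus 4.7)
The plan is to construct, for each integer $N$ in a suitable infinite set, a linearly independent triple of integer points $(\uy_{-1}, \uy_0, \uy_1^{(N)})$ lying on the quadric $\{\varphi = 1\}$ and satisfying the hypotheses of Lemmas \ref{ii:lemma:alg}, \ref{ii:lemma:est} and \ref{ii:lemma:Xi}. Extending each triple by the recursion $\uy_{i+1} = \psi(\uy_i, \uy_{i-2})$ yields a sequence $(\uy_i^{(N)})_{i \ge -1}$ whose projective image converges, by Lemma \ref{ii:lemma:Xi}, to a point $\Xi_N \in \cC$ with $\bQ$-linearly independent homogeneous coordinates for which $1/\gamma$ is an exponent of approximation. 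Distinctness of the $\Xi_N$ will then follow from Lemma \ref{ii:lemma:unicity}.

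In both cases I fix $\uy_{-1} = (1, 0, 0)$, which satisfies $\varphi(\uy_{-1}) = 1$. For $c = 0$, I let $(p_j, q_j)_{j \ge 0}$ denote the positive solutions of $x^2 - b y^2 = 1$, take $\uy_0 = (p_1, q_1, 0)$ and set $\uy_1^{(N)} = (p_N, q_N, 1)$ for each $N \ge 3$. The classical identity $p_m p_n - b q_m q_n = p_{|n - m|}$ gives
\[
 t_{-1} = 2 p_1, \qquad t_0 = 2 p_{N - 1}, \qquad t_1 = 2 p_N,
\]
hence the required ordering; the norm inequalities are immediate, and the determinant of the three vectors equals $q_1 \ne 0$. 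For $c \ge 1$, I let $(r, s)$ be the fundamental positive solution of $x^2 - c z^2 = 1$, fix once for all an index $M$ with $p_M > r$, and take $\uy_0 = (p_M r, q_M r, s)$ and $\uy_1^{(N)} = (p_{M + N} r, q_{M + N} r, s)$ for each $N \ge 1$. A direct calculation using the same Pell identity together with $p_M^2 - b q_M^2 = 1$ and $r^2 - c s^2 = 1$ shows that $\varphi$ takes value $1$ on each of these vectors, that their determinant equals $r s (q_M - q_{M + N}) \ne 0$, and that
\[
 t_{-1} = 2 p_M r, \qquad t_0 = 2 \bigl( r^2(p_N - 1) + 1 \bigr), \qquad t_1 = 2 p_{M + N} r.
\]
Since $p_{M + N} / p_N \to p_M + q_M \sqrt{b} > p_M > r$ as $N \to \infty$, the ratio $t_0 / t_1$ tends to a limit strictly less than $1$, while $p_N \to \infty$ makes $t_{-1} < t_0$ trivial for large $N$; the norm inequalities are automatic. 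Thus in either case one obtains infinitely many valid starting triples.

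Given these triples, Lemma \ref{ii:lemma:Xi} delivers the limit points $\Xi_N \in \cC$ with the properties required by Theorem \ref{proj:thm}(ii). For distinctness, suppose $\Xi_N = \Xi_{N'}$. Then Lemma \ref{ii:lemma:unicity} provides an integer $a$ and signs $\epsilon_i \in \{\pm 1\}$ such that $\uy_i^{(N')} = \epsilon_i \uy_{i + a}^{(N)}$ on an infinite tail. Since both sequences begin at $(1, 0, 0)$, which is the unique point of norm $1$ in either sequence (all subsequent terms have norm strictly greater than $1$ by Lemma \ref{ii:lemma:est}), the shift $a$ must be $0$, and then the positivity of $t'_{-1}$ and $t'_0$ combined with the periodicity of the signs (period three) forces all $\epsilon_i = 1$. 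Thus the two triples coincide, giving $N = N'$ and concluding the proof.

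The main obstacle will be the construction of valid triples in the case $c \ge 1$: the naive choice $\uy_0 = (p_m, q_m, 0)$, $\uy_1 = (p_n, q_n, 0)$ places all three vectors in the coordinate plane $\{x_2 = 0\}$ and so violates linear independence, while the alternative $\uy_0 = (p_1, q_1, 0)$, $\uy_1 = (r, 0, s)$ produces $t_0 = t_{-1} t_1 / 2$, which cannot lie strictly between $t_{-1}$ and $t_1$. Working on the orbit of $(r, 0, s)$ under the $(0, 1)$-Pell operation and taking $M$ large enough so that $p_M > r$ resolves both issues simultaneously, and verifying the ordering $t_{-1} < t_0 < t_1$ then reduces to a routine asymptotic comparison based on the recursion $p_{M + N} = p_M p_N + b q_M q_N$ and the exponential growth of $p_n$. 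The subsequent applications of the lemmas from Section \ref{sec:ii} and the uniqueness argument are then straightforward.
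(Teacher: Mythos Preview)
Your proof is correct and follows the same strategy as the paper: build starting triples on $\{\varphi=1\}$, feed them into Lemmas \ref{ii:lemma:alg}--\ref{ii:lemma:Xi}, and separate the limit points via Lemma \ref{ii:lemma:unicity}. The only substantive difference is the choice of triple in the case $c>1$. The paper handles both cases at once by taking $\uy_0=(m,n,0)$ and $\uy_1=(rm',rn',t)$ with $(r,t)$ any positive solution of $r^2-ct^2=1$ (so $(r,t)=(1,t)$ with $t>0$ arbitrary when $c=0$) and $(m',n')$ large enough that $m<mm'-bnn'<m'$; this makes the ordering $t_{-1}<t_0<t_1$ immediate without any asymptotics. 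Your alternative $\uy_0=(p_Mr,q_Mr,s)$, $\uy_1=(p_{M+N}r,q_{M+N}r,s)$ also works, at the cost of the limit computation $p_{M+N}/p_N\to p_M+q_M\sqrt{b}$ to check $t_0<t_1$ for large $N$. The uniqueness argument is essentially the paper's (norm $1$ forces $a=0$; positivity of the first entries, or equivalently of $t'_{-1},t'_0$ as you phrase it, kills the signs), though your appeal to ``period three'' is unnecessary once $\epsilon_{-1}=\epsilon_0=\epsilon_1=1$ is established.
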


\begin{proof}
The Pell equation $x_0^2-bx_1^2=1$ admits infinitely many solutions in positive integers.  We choose one such solution $(x_0,x_1)=(m,n)$.  For the other solutions $(m',n')\in(\bN^*)^2$, the quantity $mm'-bnn'$ behaves asymptotically like $m'/(m+n\sqrt{b})$ as $m'\to\infty$ and thus, we have $m<mm'-bnn'<m'$ as soon as $m'$ is large enough.  We fix such a solution $(m',n')$.  We also choose a pair of integers $r,t>0$ such that $r^2-ct^2=1$.  Then, the three points
\[
 \uy_{-1}=(1,0,0), \quad \uy_0=(m,n,0) \et \uy_1=(rm',rn',t)
\]
are $\bQ$-linearly independent.  They satisfy
\[
 \|\uy_{-1}\|=1 < \|\uy_0\|=m < rm' \le \|\uy_1\|
 \et
 \varphi(\uy_i)=1 \quad (i=-1,0,1).
\]
For such a triple, consider the corresponding sequences $(t_i)_{i\ge -1}$ and $(\uy_i)_{i\ge -1}$ as defined in Lemma \ref{ii:lemma:alg}.  The symmetric bilinear form attached to $\varphi$ being $\Phi=2(x_0y_0-bx_1y_1-cx_2y_2)$, we find
\[
 t_{-1}=2m < t_0=2r(mm'-bnn') < t_1=2rm'.
\]
Therefore the hypotheses of Lemmas \ref{ii:lemma:est} and \ref{ii:lemma:Xi} are fulfilled and so the sequence $([\uy_i])_{i\ge -1}$ converges in $\bP^2(\bR)$ to a zero $\Xi$ of $\varphi$ which has $\bQ$-linearly independent homogeneous coordinates and for which $1/\gamma$ is an exponent of approximation. To complete the proof and show that there are infinitely many such points, it suffices to prove that any other choice of $m,n,m',n',r,t$ as above leads to a different limit point.  Clearly, it leads to a different sequence $(\uy'_i)_{i\ge -1}$.  If $[\uy'_i]$ and $[\uy_i]$ converge to the same point $\Xi$ as $i\to\infty$, then by Lemma \ref{ii:lemma:unicity}, there exists $a\in\bZ$ such that $\uy'_i=\pm \uy_{i+a}$ for each $i\ge \max\{-1,-1-a\}$.  But, in both sequences $(\uy_i)_{i\ge -1}$ and $(\uy'_i)_{i\ge -1}$, the first point is the only one of norm $1$, and moreover the first three points have non-negative entries. So, we must have $a=0$ and $\uy'_i=\uy_i$ for $i=-1,0,1$, a contradiction.
\end{proof}

%
%

\end{document}